\title{Norm inequalities for Calderon-Zygmund  operators in some
generalized Hardy-Morrey spaces}
\author{Martial DAKOURY and Justin FEUTO }
\newtheorem{thm}{Theorem}[section]
\newtheorem{prop}[thm]{Proposition}
\newtheorem{lem}[thm]{Lemma}
\newtheorem{defn}[thm]{Definition}
\newcommand{\comment}[1]{}
\newcommand{\epf}{ $\Box$\medskip}
\def\a{{\mathfrak a}}
\begin{document}

\maketitle

\begin{abstract}
We use a molecular characterization of generalized Hardy-Morrey spaces, to provide a norm controls of Calder\'on-Zygmund operators and their associated commutators in the above mention spaces.
\end{abstract}

\section{Introduction}
 Let $d$ be a positive integer, $\mathbb R^d$ the Euclidean space of dimension $d$ equipped with the Euclidean norm and the Lebesgue measure denoted $dx$. We consider $\varphi$ in the  Schwartz class $\mathcal S(\mathbb R^d)$ having non vanish integral. The classical Hardy space $\mathcal H^p(\mathbb R^d)$ ($0<p<\infty$) consists of tempered distributions $f$ satisfying $\int_{\mathbb R^d}\vert \mathcal Mf(x)\vert^p dx<\infty$,  where 
 \begin{equation}
\mathcal Mf(x)=\sup_{t>0}\vert (f\ast \varphi_t(x)\vert,\label{maxih}
\end{equation}
   with $\varphi_t(x)=t^{-d}\varphi(t^{-1}x)$, $t>0$.
 
 In \cite{AbFe}, Ablé and the second author gave a generalization of these spaces by defining for $0<p,q\leq \infty$ the Hardy-amalgam spaces $\mathcal H^{(p,q ) }( \mathbb R^d)$ as the set of tempered distributions $f$ satisfying  $ \left\|\mathcal Mf\right\|_{p,q}<\infty$, 
where 
$$\left\|g\right\|_{p,q}=\left[\sum_{k\in\mathbb Z^d}\left(\int_{Q_k}\vert g(x)\vert^pdx\right)^{\frac{q}{p}}\right]^{\frac{1}{q}}$$
for any measurable function $g$ and $Q_k=k+\left[0,1\right)^d$, $k\in\mathbb Z^d$. We recall that  Wiener amalgam space $(L^p,\ell^q)(\mathbb R^d)$ is the space of measurable functions $g$ satisfying $\left\|g\right\|_{p,q}<\infty$. From the equality $(L^p,\ell^p)(\mathbb R^d)=L^p$ we have that $\mathcal H^p(\mathbb R^d)$ is a special case of $\mathcal H^{(p,q)}(\mathbb R^d)$. Just like classical Hardy spaces, Hardy-amalgam spaces do not depend on the choice of $\varphi$ in the Schwartz class with nonzero integral.

In \cite{DmFe}, we defined a  generalized Hardy-Morrey space  denoted $\mathcal H^{(p,q,\alpha)}(\mathbb R^d)$ as the sub-space of Hardy-amalgam space $\mathcal H^{(p,q)}(\mathbb R^d)$ consisting of $f\in\mathcal H^{(p,q)}(\mathbb R^d)$ satisfying 
 $$\left\|f\right\|_{\mathcal H^{(p,q,\alpha)}}:=\sup_{\rho>0}\left\|St^\alpha_\rho f\right\|_{\mathcal H^{(p,q)}}<\infty,$$ 
 where $St^\alpha_\rho f$ is the tempered distribution defined by $\langle St^\alpha_\rho f,\varphi\rangle=\langle f,St^{\alpha'}_{\rho^{-1}}\varphi\rangle$, with $\frac{1}{\alpha'}+\frac{1}{\alpha}=1$ and $St^{\alpha}_\rho\varphi(x)=\rho^{\frac{-d}{\alpha}}\varphi(\rho^{-1}x)$ for $\varphi$ in the Schwartz class $\mathcal S(\mathbb R^d)$. 
 
 The space $(L^p,\ell^q)^\alpha(\mathbb R^d)$, consisting of Lebesgue measurable functions $g$ satisfying $\sup_{\rho>0}\left\|St^ \alpha_ \rho f\right\|_{p,q}<\infty$  named Fofana space, was introduced by Fofana Ibrahim in 1988 \cite{Fo1}. For these reason, we will sometimes refer to the space $\mathcal H^{(p,q,\alpha)}(\mathbb R^d)$ as a Hardy-Fofana space. Notice that for $q=\infty$ and $\alpha<p$, $(L^q,\ell^p)^\alpha(\mathbb R^d)$ coincides  with the classical Morrey space $L^{p,d(1-\frac{p}{\alpha}}(\mathbb R^d)$ and $\mathcal H^{(p,\infty,\alpha)}(\mathbb R^d)$ with the Hardy-Morrey space defined by  Jia and Wang in \cite{JW}. This justifies the generalized Hardy-Morrey space terminology.
  
  It is well known that the Hardy space $\mathcal H^p(\mathbb R^d)$ for $p \in\left(0, 1\right]$ is a good substitutes of $L^p(\mathbb R^d)$ when studying the boundedness of operators. We can just cite  the boundedness of 
Riesz operators on $\mathcal H^p(\mathbb R^d)$ but not on $L^p(\mathbb R^d)$ when $p \in\left( 0, 1\right]$. As Wiener amalgam space $(L^p,\ell^q)(\mathbb R^d)$ is concerned (see \cite{H}), its substitutes in this context is the Hardy-amalgam space $\mathcal H^{(p,q)}(\mathbb R^d)$. 

It is also a classical result that the Calder\'on-Zygmund operators are bounded on  $L^{\alpha}$ for
$1 < \alpha < \infty$; see for example   \cite[Theorem 5.10]{JDuo} and \cite[ Theorem 8.2.1]{Graf1}. In \cite{AbFe2}, the authors proved that Calder\'on-Zygmund operators are bounded from Hardy-amalgam spaces $\mathcal H^{(p,q)}(\mathbb R^d)$ into Wiener amalgam spaces $(L^p,\ell^q)(\mathbb R^d)$; and under certain regularity assumptions on its kernel, on Hardy-amalgam space $\mathcal H^{(p,q)}(\mathbb R^d)$.

 From the definition of $\mathcal H^{(p,q,\alpha)}(\mathbb R^d)$ space, we see that $\left\|\cdot\right\|_{\mathcal H^{(p,q)}}\leq \left\|\cdot\right\|_{\mathcal H^{(p,q,\alpha)}}\leq\left\|\cdot\right\|_{\mathcal H^{\alpha}} $, which implies that $\mathcal H^\alpha(\mathbb R^d)\subset\mathcal H^{(p,q,\alpha)}(\mathbb R^d)\subset \mathcal H^{(p,q)}(\mathbb R^d)$, whenever $p\leq \alpha\leq q$.  
 
Our objectives in this article are twofold. We first show that our spaces are stable for some operators known to be bounded in Hardy-amalgam spaces. We can cite among others the Riesz potential operators and Calder\'on-Zygmund operators. Second, we give norm inequalities for commutators associated with the Calder\'on-Zygmund operators. 

The paper is organised as follow :
 Section 2 is devoted to the preliminaries on Hardy-amalgam and generalized Hardy-Morrey spaces. In Section 3 we give a boundedness result for Calder\'on-Zygmund operators in Hardy-Fofana spaces, while in Section 4 we prove the boundedness of their commutators.

Throughout the paper, $\mathcal{S}:= \mathcal S(\mathbb R^{d})$ denotes the Schwartz class of rapidly decreasing smooth functions equipped with its usual topology. The dual space of $\mathcal S$ is the space of tempered distributions denoted by $\mathcal S':= \mathcal S'(\mathbb R^{d})$. If $f\in\mathcal{S'}$ and $\theta\in\mathcal{S}$, we denote the evaluation of $f$ on $\theta$ by $\left\langle f,\theta\right\rangle$. The letter $C$ will be used for non-negative constants independent of the relevant variables, that may change from one occurrence to another. When a constant depends on some important parameters $\alpha,\gamma,\ldots$, we denote it by $C(\alpha,\gamma,\ldots)$. Constants with subscript, such as $C_{s}$, do not change in different occurrences and depend on the parameters mentioned in them. We adopt the following abbreviation $\mathrm{\bf A}\lesssim \mathrm{\bf B}$ for the inequalities $\mathrm{\bf A}\leq C\mathrm{\bf B}$, where $C$ is a non-negative constant independent of the main parameters. If $\mathrm{ A}\lesssim \mathrm{ B}$ and $\mathrm{ B}\lesssim \mathrm{ A}$, then we write $\mathrm{ A}\approx \mathrm{ B}$. The notation $\mathrm{A}:=\mathrm{B}$ means that the definition of $\mathrm{A}$ is $\mathrm{B}$.

For a real number $\lambda>0$ and a cube $Q=(x_Q,\ell_Q)\subset\mathbb R^{d}$ (by a cube we mean a cube whose edges are parallel to the coordinate axes), we write $\lambda Q$ for the cube with same center as $Q$ and side-length $\lambda$ times side-length of $Q$ and $Q^\lambda$ for the cube centered at $\lambda x_Q$ and with side-length $\lambda \ell_Q$ while $\left\lfloor \lambda \right\rfloor$ stands for the greatest integer less than or equal to $\lambda$. Also, for $x\in\mathbb R^{d}$ and a real number $r>0$, $Q(x,r)$ will denote the cube centered at $x$ and side-length $r$. We use the same notations for balls. For a measurable set $E\subset\mathbb R^d$, we denote by $\chi_{E}$ its characteristic function and by $\left|E\right|$ its Lebesgue measure. For a function $f:\mathbb R^d\rightarrow\mathbb C$, $\mathrm{supp }f$ stands for the support of $f$.

\section{Preliminaries on Hardy-amalgam and generalized Hardy-Morrey spaces}
We will always assume that $p\leq \alpha\leq q$, since in \cite{Fo1}, Fofana proved that the space $(L^p,\ell^q)^\alpha(\mathbb R^d) $ is non-trivial if and only if the exponents respect this order.

Just as classical Hardy spaces are related to Lebesgue spaces and Hardy-amalgam spaces to amalgam spaces, we have  the following relationship between our generalized Hardy-Morrey spaces and  Fofana's spaces (see \cite{DmFe}). 

Let $1 \leq p\leq\alpha\leq q < \infty$.
\begin{itemize}
\item If $1<p$ then the spaces $\mathcal H^{(p,q,\alpha)}(\mathbb R^d)$ and $(L^{p},L^{q})^{\alpha}(\mathbb R^d)$ are equal with equivalence norms.
\item The space $\mathcal H^{(1,q,\alpha)}(\mathbb R^d)$ is continuously embedded in $(L^1,\ell^q)^{\alpha}(\mathbb R^d)$.

\item If $p< 1$ then the space $\mathcal H^{(p,q,\alpha)}(\mathbb R^d)$ is a  quasi-Banach space, when it is equipped with  the  quasi-norm
$\left\|\cdot\right\|_{\mathcal H^{(p,q,\alpha)}}$ and for $f,g \in\mathcal H^{(p,q,\alpha)}(\mathbb R^d)$,
$$ \left\|f +g\right\|^{p}_{\mathcal H^{(p,q,\alpha)}}\leq \left\|f\right\|^{p}_{\mathcal H^{(p,q,\alpha)}}+\left\|g\right\|^{p}_{\mathcal H^{(p,q,\alpha)}}.$$ 
\end{itemize}

From the definition of $\mathcal H^{(p,q,\alpha)}(\mathbb R^d)$ spaces, it is clear that if an operator is bounded in $(L^p,\ell^q)(\mathbb R^d)$ respectively in $\mathcal H^{(p,q)}(\mathbb R^d)$ space and commute with the dilation $St^\alpha_\rho$ for all $\rho>0$, then it is also bounded respectively in $(L^p,\ell^q)^\alpha(\mathbb R^d)$ and in $\mathcal H^{(p,q,\alpha)}(\mathbb R^d)$. This is for example the case of Hardy-Littlewood maximal operator defined by 
$$\mathfrak M f(x)=\sup_{r>0}\vert B(x,r)\vert^{-1}\int_{B(x,r)}\vert f(y)\vert dy,$$
 which is bounded in $(L^p,\ell^q)(\mathbb R^d)$ for $p>1$ (see \cite{CHH}) and commute with the dilation $St^\alpha_\rho$; it is  bounded in $(L^p,\ell^q)^\alpha(\mathbb R^d)$. We also have that Riesz transforms $R_j$ defined by 
$$R_jf(x)=\frac{\Gamma(\frac{d+1}{2})}{\pi^{\frac{d+1}{2}}}\mathrm{p.v}\int_{\mathbb R^d}\frac{y_jf(x-y)}{\vert y\vert^{d+1}}dy\ ; j\in\left\lbrace 1,2,\cdots,d\right\rbrace$$
are bounded in $\mathcal H^{(p,q)}(\mathbb R^d)$ for $0<p\leq 1$ as proved in  \cite[Corollary 4.19]{AbFe2}, and they commute with  $St^\alpha_\rho$. Hence, they are bounded in $\mathcal H^{(p,q,\alpha)}(\mathbb R^d)$. As Riesz potential is concerned, we have a result similar to the one obtain in Lebesgue space. 

The Riesz potential operator $I_\gamma$ \ ($0<\gamma<d$), defined by
$$ I_\gamma f(x)=\mathrm{p.v}\int_{\mathbb R^d}\frac{f(y)}{\vert x-y\vert^{d-\gamma}}dy,$$
satisfies the following relation : $I_\gamma \circ St^\alpha_\rho=St^{\alpha^\ast}_\rho\circ I_\gamma$, with $\frac{1}{\alpha^\ast}=\frac{1}{\alpha}-\frac{\gamma}{d}>0$. For $p>1$ the operator $I_\gamma$ is bounded from $(L^p,\ell^q)(\mathbb R^d)$ into $(L^{p^\ast},\ell^{q^\ast})(\mathbb R^d)$ as we can see in \cite{CMP}, so that using the relation between $I_\gamma$ and $St^\alpha_\rho$, we deduce (see \cite[Proposition 4.1]{Fe1}) that $I_\gamma$ is bounded from $(L^p,\ell^q)^\alpha(\mathbb R^d)$ into $(L^{p^\ast},\ell^{q^\ast})^{\alpha^\ast}(\mathbb R^d)$ with a weak control when $p=1$. For $0<p\leq 1$, Abl\'e and the second author proved in \cite[Theorem 4.22]{AbFe2} that if $0<p\leq \min(1,q)<+\infty$ and $\frac{\gamma}{d}<\frac{1}{p}$, then the operator $I_\gamma$ is bounded from $\mathcal H^{(p,q)}(\mathbb R^d)$ into $\mathcal H^{(p^\ast,q^\ast)}(\mathbb R^d)$. We then deduce that  $I_\gamma$ is bounded from $\mathcal H^{(p,q,\alpha)}(\mathbb R^d)$ into $\mathcal H^{(p^\ast,q^\ast,\alpha^\ast)}(\mathbb R^d)$ under the same hypothesis.


In this work, we are particularly interested in the boundedness of Calder\'on-Zygmund operators in generalized Hardy-Morrey spaces. As mention at the beginning of this section,   $\mathcal H^{p,q,\alpha)}(\mathbb R^d)\approx (L^p,\ell^q)^\alpha(\mathbb R^d)$ for $1<p$. The second author proved in  \cite[Theorem 4.5 and 4.6]{Fe1} that Calder\'on-Zygmund operators and their commutators with $BMO$ functions are bounded in $(L^p,\ell^q)^\alpha(\mathbb R^d)$ whenever $1<p$. He also obtain a weak control in the limit case. We prove here that we have some bounded results in $\mathcal H^{(p,q,\alpha)}(\mathbb R^d)$ when $p\leq 1$. In order to achieve our goals, we will need another characterization of our spaces namely the atomic characterization. 


Let $0<p\leq \alpha\leq q<\infty,$  $p\leq 1<r\leq \infty$ and $\alpha\leq r.$  Let  $s$ be an integer greater or equal to $\lfloor d(\frac{1}{p}-1)\rfloor.$  A function $\mathfrak a:\mathbb R^d\rightarrow \mathbb C$ is called  $(p,\alpha,r,s)$-atom if it  satisfies the following conditions:
\begin{enumerate}
\item There exists a cube $Q$ such that 
 $\mathrm{supp}(\a)\subset Q,$
\item
 $\left\|\a\right\|_{r}\leq |Q|^{\frac{1}{r}-\frac{1}{\alpha}};$  
\item
$\int_{\mathbb{R}^{d}}x^{\beta}\a(x)dx=0,$\; for all multi-indexes $\beta$ such that $|\beta|\leq s.$
\end{enumerate}
Notice that if $p=\alpha$ we recover the atoms of $\mathcal H^p(\mathbb R^d)$, which are also the atoms used in \cite{AbFe} in the case of Hardy-amalgam space $\mathcal H^{(p,q)}(\mathbb R^d)$.

We denote $\mathcal{A}(p,\alpha,r,s)$ the set of all $(\a,Q)$ such that $\a$ and $Q$  satisfy conditions $(1)-(3)$. The set $\mathcal A(p,p,r,s)$ is exactly the set $\mathcal A(p,r,s)$ defined in \cite{AbFe}. 
It is easy to see that  $\left( \mathfrak{a},Q\right) \in\mathcal A(p,\alpha,r,s)$ if and only if  $\left( \vert Q\vert^{\frac{1}{\alpha}-\frac{1}{p}}\mathfrak{a},Q\right) \in\mathcal A(p,r,s) $, and $(\mathfrak{a},Q) \in\mathcal A(p,\alpha,r,s)$ implies that  $(St_{\rho}^{\alpha}\mathfrak{a},\rho Q)\in \mathcal A(p,\alpha,r,s)$ which is equivalent to $(St_{\rho}^{\alpha}\mathfrak{a}| Q^\rho|^{\frac{1}{\alpha}-\frac{1}{p}}, Q^\rho)\in\mathcal A(p,r,s)$.

Let $\mathcal H^{(p,q)}_{\mathrm{fin},r,s}(\mathbb R^d)$  be the set of all finite linear combinations of $(p,r,s)$-atoms, equipped with the quasi-norm $\left\|\cdot\right\|_{\mathcal H^{(p,q)}_{\mathrm{fin},r,s}}$ defined by 
$$\Vert f\Vert_{\mathcal H^{(p,q)}_{\mathrm{fin},r,s}}:=\inf\left\{\left\|\sum^{N}_{n=0}\bigg(   \frac{|\lambda_{n}|}{\left\|\chi_{Q_{n}}\right\|_{p}}\bigg)^{\eta}\chi_{Q_{n}}\right\|_{\frac{p}{\eta},\frac{q}{\eta}}^{\frac{1}{\eta}} : f=\sum^N_{n=0}\lambda_{n}\mathfrak{a}_{n} \right\},$$
where the infimum is taken over all finite sequences $\left\lbrace (\mathfrak{a}_n,Q_n)\right\rbrace^N_{n=0}$ in $\mathcal A(p,r,s)$ and all finite sequences of scalars $(\lambda_n)^N_{n=0}$ in $\mathbb C$ such that $f=\sum^N_{n=0}\lambda_{n}\mathfrak{a}_{n}$, and $\eta$ is a fix real number satisfying $0<\eta\leq 1$ if $r=\infty$ and $0<\eta\leq p$ if $\max \left\lbrace q,1\right\rbrace  <r<\infty$. 

Let $0<p<\min(1,q)$ and $\max(1,q)<r\leq \infty$. The quasi-norms $\left\|\cdot\right\|_{\mathcal H^{(p,q)}_{\mathrm{fin},r,s}}$
and $\left\|\cdot\right\|_{\mathcal H^{(p,q)}}$ are equivalent on $\mathcal H^{(p,q)}_{\mathrm{fin},r,s}(\mathbb R^d)$ whenever $r<\infty$ while $\left\|\cdot\right\|_{\mathcal H^{(p,q)}_{\mathrm{fin},\infty,s}}$
and $\left\|\cdot\right\|_{\mathcal H^{(p,q)}}$ are equivalent on $\mathcal H^{(p,q)}_{\mathrm{fin},\infty,s}(\mathbb R^d)\cap \mathcal C(\mathbb R^d)$,  where $\mathcal C(\mathbb R^d)$ stands for the space of  continuous complex values functions on $\mathbb R^d$ \cite[Theorem 4.9]{AbFe}. 

In \cite{DmFe}, the authors accordingly consider the space $\mathcal H^{(p,q,\alpha)}_{\mathrm{fin},r,s}(\mathbb R^d)$ of finite combinations of $(p,\alpha,r,s)$-atoms equipped  with the quasi-norm 
$$\Vert f\Vert_{\mathcal H^{(p,q,\alpha)}_{\mathrm{fin},r,s}}:=\inf\left\{\left\|\sum^{N}_{n=0}\bigg(   \frac{|\lambda_{n}|}{\left\|\chi_{Q_{n}}\right\|_{\alpha}}\bigg)^{\eta}\chi_{Q_{n}}\right\|_{\frac{p}{\eta},\frac{q}{\eta},\frac{\alpha}{\eta}}^{\frac{1}{\eta}} : f=\sum^N_{n=0}\lambda_{n}\a_{n} \right\},$$
and proved that the above remark remains true in this context.

Let us now specify the definition of the Calder\'on-Zygmund operators which will be of interest in this work. 

Let   
$ \Delta:=\{(x,x): x\in\mathbb{R}^{d}\}$ be the diagonal of $\mathbb{R}^{d}\times\mathbb{R}^{d}$. For $\delta>0$ a linear operator $T$ is called $\delta$-Calder\'on-Zygmund operator if it is a bounded operator on $L^2(\mathbb R^d)$ and there exists a function $K:\mathbb{R}^{d}\times\mathbb{R}^{d}\setminus \Delta\rightarrow \mathbb{C}$ named  kernel and $A>0$ satisfying the following conditions. 
\begin{equation}\label{1}
|K(x,y)|\leq A |x-y|^{-d},
\end{equation}
\begin{equation}\label{2}
|K(x,y)-K(x,z)|\leq A \frac{|y-z|^{\delta}}{|x-y|^{d+\delta}},\;\; \text{if}\;\;  |x-y|\geq 2|y-z|,
\end{equation}
\begin{equation}\label{3}
|K(x,y)-K(w,y)|\leq A \frac{|x-w|^{\delta}}{|x-y|^{d+\delta}},\;\; \text{if}\;\;  |x-y|\geq 2|x-w|,
\end{equation}
and for any $f\in L^2(\mathbb R^d)$ with compact support 
\begin{eqnarray*} 
 T(f)(x)=\int_{\mathbb{R}^{d}}K(x,y)f(y)dy, \; x\notin \mathrm{supp}(f). 
\end{eqnarray*}

In \cite{AbFe2} it is proved that if $T$ is a $\delta$-Calder\'on-Zygmund operator then $T$ is extendable to a bounded operator from $\mathcal H^{(p,q)}(\mathbb R^d)$ into $(L^p,\ell^q)(\mathbb R^d)$ whenever $\frac{d}{d+\delta}<p\leq 1$. 
From the remark given at the beginning of this section, we can say that if in addition the kernel $K$ is homogeneous of degree $s<-\frac{d}{\alpha'}$ then $T$ is bounded from $\mathcal H^{(p,q,\alpha)}(\mathbb R^d)$ into $(L^p,\ell^q)^\alpha(\mathbb R^d)$. 

In order to obtain a bounded extends of a $\delta$-Calder\'on-Zygmund operator on $\mathcal H^{(p,q)}(\mathbb R^d)$, Ablé and the second author consider kernels having a certain regularity. More precisely they proved the following result.
\begin{thm}\cite[Theorem 4.7]{AbFe2}\label{able1} Let $T$ be a $\delta$-Calder\'on-Zygmung operator with kernel $K$. We assume that :
\begin{enumerate}
\item there exist an integer $s\geq 0$ and a constant $C_{s}>0$ such that 
\begin{equation}\label{lem1}
|\partial_{y}^{\beta}K(x,y)|\leq \frac{C_{s}}{|x-y|^{d+|\beta|}},
\end{equation}
for all multi-index $\beta$ with $|\beta|\leq d+2s+3$ \quad and\;  all $(x,y)\in \mathbb{R}^{d}\times \mathbb{R}^{d}\backslash\Delta,$
\item if $f\in L^{2}(\mathbb{R}^{d})$ with compact support and $\int_{\mathbb{R}^{d}}x^{\beta}f(x)dx=0,$ for all $|\beta|\leq d+2s+2,$ then
\begin{equation}\label{3395}
\int_{\mathbb{R}^{d}}x^{\beta}T(f)(x)dx=0,
\end{equation}
for all multi-indexes $\beta$ with $|\beta|\leq s.$
\end{enumerate}
Then, $T$ extends to bounded operator on $\mathcal H^{(p,q)}(\mathbb R^d)$ for $\frac{d}{d+s+1}<p\leq 1$.
\end{thm}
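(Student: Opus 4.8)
The plan is to adapt the Taibleson--Weiss molecular method to the amalgam setting, reducing everything to the action of $T$ on a single atom. By the equivalence of $\left\|\cdot\right\|_{\mathcal H^{(p,q)}_{\mathrm{fin},r,s}}$ with $\left\|\cdot\right\|_{\mathcal H^{(p,q)}}$ recalled above (\cite[Theorem 4.9]{AbFe}) and the density of finite atomic combinations, it suffices to bound $\left\|T\a\right\|_{\mathcal H^{(p,q)}}$ uniformly over atoms and then to resum and extend by density. A first observation fixes the bookkeeping: the atomic characterization holds for every vanishing--moment order exceeding $\lfloor d(\frac1p-1)\rfloor$, and the hypothesis $\frac{d}{d+s+1}<p$ gives $\lfloor d(\frac1p-1)\rfloor\le s$. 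Hence I am free to run the argument with $(p,r,N)$-atoms (taking $r\ge 2$, so that $\a\in L^2$) carrying the larger number $N:=d+2s+2$ of vanishing moments; this is exactly what makes the structural hypothesis (2) applicable to an atom.

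The core is then to show that, for such an atom $\a$ supported in a cube $Q=(x_Q,\ell_Q)$ with $\left\|\a\right\|_r\le|Q|^{\frac1r-\frac1p}$, the image $T\a$ is a uniformly bounded multiple of a molecule centered at $Q$. This splits into two independent estimates. For the size away from $Q$, I would subtract from $K(x,\cdot)$ its Taylor polynomial of order $N$ about $x_Q$, annihilate it against the $N$ vanishing moments of $\a$, and control the remainder by the derivative bound (\ref{lem1}) applied up to order $N+1=d+2s+3$ -- this is precisely where the full kernel regularity is consumed -- obtaining
\[
|T\a(x)|\lesssim |Q|^{1-\frac1p}\,\frac{\ell_Q^{\,N+1}}{|x-x_Q|^{d+N+1}},\qquad |x-x_Q|\gtrsim \ell_Q,
\]
while for $x$ in a fixed dilate $cQ$ of $Q$ the $L^2$-boundedness of $T$ together with H\"older's inequality and the normalization of $\a$ give $\left\|T\a\right\|_{L^2(cQ)}\lesssim|Q|^{\frac12-\frac1p}$. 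For the cancellation, hypothesis (2) applies verbatim to $\a$ (which has moments up to $N=d+2s+2$) and yields that $T\a$ has vanishing moments up to order $s$, which is all a molecule for $\mathcal H^{(p,q)}$ requires in this range of $p$.

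It remains to feed these two facts into a molecular characterization of $\mathcal H^{(p,q)}$. Concretely, I would decompose the molecule $T\a$ along the annuli $2^{k}Q\setminus 2^{k-1}Q$, renormalize each piece into an ordinary $(p,r,s)$-atom $b_{k}$ supported in $2^{k}Q$, and read off coefficients $\mu_{k}$ from the two estimates above; the decay exponent $d+N+1=2d+2s+3$ is chosen exactly so that the $\mu_{k}$ are geometrically summable in the $\ell^{\eta}$-sense dictated by the finite-atom functional $\left\|\cdot\right\|_{\frac p\eta,\frac q\eta}$. Substituting $Tf=\sum_n\lambda_n T\a_n=\sum_{n,k}\lambda_n\mu_{n,k}b_{n,k}$ then produces a genuine atomic decomposition of $Tf$ whose atomic functional is controlled by that of $f$, giving $\left\|Tf\right\|_{\mathcal H^{(p,q)}}\lesssim\left\|f\right\|_{\mathcal H^{(p,q)}}$. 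The main obstacle is this last step: unlike the classical $\mathcal H^p$ case, where the lattice does not intervene, one must show that the molecular decay survives the Wiener amalgam summation over the unit cubes $\{Q_k\}_{k\in\Z^d}$ \emph{at every scale} $2^{k}\ell_Q$, and it is the interaction between this two-parameter (over $k$ and over the lattice) summation and the outer $\ell^{q/\eta}$ norm that forces the strong decay -- hence the large number of vanishing moments and the kernel derivatives up to order $d+2s+3$.
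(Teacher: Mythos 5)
Your proposal follows essentially the same route as the paper, which imports this theorem from \cite{AbFe2} and reuses exactly its mechanism: atoms endowed with the enlarged number $d+2s+2$ of vanishing moments are mapped by $T$ into molecules (this is Lemma \ref{atom_mol}, proved via the kernel Taylor expansion of order $d+2s+3$, the $L^2$-boundedness on a dilate of $Q$, and the moment-preservation hypothesis (2)), and these molecules are then resummed in $\mathcal{H}^{(p,q)}(\mathbb R^d)$ before concluding by the finite-atomic norm equivalence and density. The only divergence is that where you propose to re-atomize each molecule by hand over dyadic annuli, the paper invokes the molecular reconstruction theorem as a black box (Theorem \ref{Able2}, from \cite{AbFe}), whose proof is precisely the Taibleson--Weiss-type annular decomposition you sketch, with the two-parameter amalgam summation handled by the vector-valued maximal inequality of Lemma \ref{tech} (3).
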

Their proof relies essentially on the molecular characterization of $\mathcal H^{(p,q)}(\mathbb R^d).$

\begin{defn}\label{mole1}
Let $1<r\leq \infty$ and $\alpha\leq r$. Assume $s\geq \lfloor d(\frac{1}{p}-1 ) \rfloor$ is an integer. A measurable function $\mathrm{m}$ is $(p,\alpha,r,s)$-molecule centered on a cube $Q$ if the following conditions are fulfilled.
\begin{enumerate}
\item $\left\|\mathrm{m}\chi_{\widetilde{Q}}\right\|_{r}\leq |Q|^{\frac{1}{r}-\frac{1}{\alpha}},$ where $\widetilde{Q}:= 2\sqrt{d}Q,$
\item  $ |\mathrm{m}(x)|\leq |Q|^{-\frac{1}{\alpha}}\left(1+ \frac{|x-x_{Q}|}{\ell(Q)}\right)^{-2d-2s-3}$ for all $ x\notin \widetilde{Q},$ \text{where} $x_{Q}$ and $ \ell(Q)$ denote respectively the center and the side-length of $Q$.
\item  $\int_{\mathbb{R}^{d}} x^{\beta}\mathrm{m}(x)dx=0$ for all multi-indexes $\beta$ with $|\beta|\leq s.$
\end{enumerate}
\end{defn}
The molecules use by Ablé et al in \cite{AbFe} are $(p,r,s):=(p,p,r,s)$-molecules. We denote by $\mathcal{M}\ell (p,\alpha,r,s)$ the set of all $(\mathrm{m},Q)$ in which $\mathrm{m}$ is a $(p,\alpha,r,s)$-molecule centered on $Q$. As in the case of atoms we remark that $\mathcal{M}\ell (p,p,r,s)$ is exactly the set $\mathcal{M}\ell (p,r,s)$ use in \cite{AbFe}. It comes from this definition that $\mathcal{A}(p,\alpha,r,s)\subset\mathcal{M}\ell (p,\alpha,r,s).$
For the proof of  \cite[Theorem 4.7]{AbFe}, the authors use essentially  the following results.

\begin{lem}\cite[Lemma 4.5]{AbFe2}\label{atom_mol}
Let $T$ be a $\delta$-Calder\'on–Zygmund operator with kernel $K$ which satisfy the following conditions 
\begin{enumerate}
\item  There exist an integer $s\geq 0$  and a constant $C_s>0$ such that
\begin{equation}
    \vert\beta_yK(x,y)\vert \leq \frac{C_s}{\vert x-y\vert^{d+\vert\beta\vert}} 
\end{equation}
for all $\beta$ satisfying $\vert\beta\vert\leq d+2s+3$  and all $(x, y) \in\mathbb R^d\times \mathbb R^d\setminus \Delta$.
\item  If $f \in L^2(\mathbb R^d)$ with compact support and 
$\int_{\mathbb R^d}x^\beta f(x)dx=0$, for all multi-indexes $\beta$ with
$\vert\beta\vert \leq d+2s+2$, then
\begin{equation}
    \int_{\mathbb R^d}x^\beta T(f)(x)dx=0
\end{equation}
for all multi-indexes $\beta$ with $\vert\beta\vert\leq s$.
\end{enumerate}
If $d(\frac{1}{p}-1)\leq s$ and $1<r<\infty$, 
then there exists a constant $C_1>0$ such that $(\frac{1}{C_1}T(\a),Q)\in\mathcal M\ell(p,r,s)$ for all $(\a,Q) \in \mathcal A(p,\infty, d+2s +2)$
\end{lem}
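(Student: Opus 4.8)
The plan is to verify, directly and one at a time, the three defining conditions of a $(p,r,s)$-molecule (that is, a $(p,p,r,s)$-molecule in the sense of Definition~\ref{mole1}) for $\frac{1}{C_1}T(\a)$, taking as its center the very cube $Q$ supporting the atom. Recall that $(\a,Q)\in\mathcal A(p,\infty,d+2s+2)$ means $\mathrm{supp}(\a)\subset Q$, $\|\a\|_\infty\leq|Q|^{-\frac1p}$, and $\int x^\beta\a(x)\,dx=0$ for all $|\beta|\leq d+2s+2$; since $\a$ is bounded with compact support it lies in $L^2(\mathbb R^d)$, so $T(\a)$ is well defined. I would dispose of the two routine conditions first and single out the pointwise decay estimate as the heart of the matter.

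For the cancellation condition (3) of Definition~\ref{mole1} I would invoke hypothesis~(2) of the lemma verbatim: as $\a\in L^2$ has compact support and vanishing moments up to order $d+2s+2$, one gets $\int x^\beta T(\a)\,dx=0$ for all $|\beta|\leq s$. For the local $L^r$ bound (condition (1)) I would use that a $\delta$-Calder\'on--Zygmund operator, being bounded on $L^2$ with a kernel obeying the size and smoothness estimates \eqref{1}--\eqref{3}, is bounded on $L^r(\mathbb R^d)$ for every $1<r<\infty$ by the classical theory; then
\[
\big\|T(\a)\chi_{\widetilde Q}\big\|_r\leq\|T(\a)\|_r\lesssim\|\a\|_r\leq\|\a\|_\infty|Q|^{\frac1r}\leq|Q|^{\frac1r-\frac1p},
\]
which is precisely condition (1), the implicit constant being absorbed into $C_1$.

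The pointwise decay estimate (condition (2)) is the core of the proof and the step I expect to be the main obstacle. Fix $x\notin\widetilde Q=2\sqrt d\,Q$; then $x\notin\mathrm{supp}(\a)$, so $T(\a)(x)=\int_Q K(x,y)\a(y)\,dy$. The idea is to Taylor-expand $y\mapsto K(x,y)$ about the center $x_Q$ to order $d+2s+2$: the vanishing moments of $\a$ annihilate every polynomial term, leaving only the remainder integrated against $\a$. Bounding the remainder by hypothesis~(1) applied with $|\beta|=d+2s+3$, and using the geometric fact that for $y\in Q$ and $\xi$ on the segment $[x_Q,y]$ one has $|x-x_Q|\geq\sqrt d\,\ell(Q)\geq2|\xi-x_Q|$, whence $|x-\xi|\gtrsim|x-x_Q|$, I obtain
\[
|T(\a)(x)|\lesssim\frac{\ell(Q)^{d+2s+3}}{|x-x_Q|^{2d+2s+3}}\int_Q|\a(y)|\,dy\lesssim|Q|^{-\frac1p}\left(\frac{\ell(Q)}{|x-x_Q|}\right)^{2d+2s+3},
\]
where I used $\int_Q|\a|\leq|Q|^{1-\frac1p}$ and $|Q|=\ell(Q)^d$. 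Since $|x-x_Q|\geq\ell(Q)$ on $x\notin\widetilde Q$, we have $1+\frac{|x-x_Q|}{\ell(Q)}\leq2\frac{|x-x_Q|}{\ell(Q)}$, so the last quantity is comparable to $|Q|^{-\frac1p}\big(1+\frac{|x-x_Q|}{\ell(Q)}\big)^{-2d-2s-3}$, giving condition (2). The numerology is tight: the $d+2s+2$ moments of $\a$ kill terms of degree $\leq d+2s+2$, while the $(d+2s+3)$-th order derivative bound yields exactly the decay exponent $2d+2s+3$ demanded by the molecule.

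It remains to note that the decay exponent $2d+2s+3$ exceeds $d+s$, so $|x|^{|\beta|}|T(\a)(x)|$ is integrable at infinity for $|\beta|\leq s$; combined with the local $L^r$ bound this makes the moments in condition (3) well defined and justifies the appeal to hypothesis~(2). The hypothesis $d(\frac1p-1)\leq s$ ensures $s\geq\lfloor d(\frac1p-1)\rfloor$, so the target parameters $(p,r,s)$ are admissible in Definition~\ref{mole1}. Collecting the three estimates and letting $C_1$ be the largest of the constants produced, we conclude that $(\frac{1}{C_1}T(\a),Q)\in\mathcal M\ell(p,r,s)$.
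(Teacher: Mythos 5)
Your proof is correct and is essentially the canonical argument here: the paper itself states this lemma as a citation from \cite{AbFe2} rather than proving it, but its own proof of the commutator analogue (Lemma~\ref{goupiste3}) runs exactly along your lines --- $L^r$-boundedness of the operator for the local estimate on $\widetilde Q$, Taylor expansion of $K(x,\cdot)$ about $x_Q$ to order $d+2s+2$ killed by the atom's vanishing moments with the remainder controlled by the derivative bound at order $d+2s+3$ to get the decay exponent $2d+2s+3$, and the moment-preservation hypothesis for the cancellation condition. Nothing is missing; your numerology and the geometric estimate $|x-\xi|\gtrsim|x-x_Q|$ for $x\notin\widetilde Q$ match the intended proof.
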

This lemma proves that the image of an atom by a $\delta$-Calder\'on-Zygmund operator, is a molecule while the next result will show that appropriate linear combination of molecules of $\mathcal H^{(p,q)}$ is an element of $\mathcal H^{(p,q)}(\mathbb R^d)$. 
\begin{thm}\cite[Theorem 5.3]{AbFe}\label{Able2}
Let $0<p\leq \min(q,1)\leq\max(1,q)<r<\infty$,   $0<\eta\leq p$ and an integer $s\geq\lfloor d(\frac{1}{p}-1)\rfloor$.  For all sequences $\{(\mathrm{m}_{n},Q_{n})\}_{n\geq 0}$ in $\mathcal{M}\ell(p,r,s)$ and all sequences of scalars  $\{\lambda_{n}\}_{n\geq 0}$ such that:
\begin{eqnarray}\label{reconsmolecule2}
\left\|\sum_{n\geq 0}\bigg(   \frac{|\lambda_{n}|}{\left\|\chi_{Q_{n}}\right\|_{p}}\bigg)^{\eta}\chi_{Q_{n}}\right\|_{\frac{p}{\eta},\frac{q}{\eta}}< \infty,
\end{eqnarray}
the series $f:=\sum_{n\geq 0}\lambda_{n}\mathrm{m}_{n}$ converge in  $\mathcal{S'}(\mathbb R^d)$ and $\mathcal{H}^{(p,q)}(\mathbb R^d),$ with
\begin{equation*}
\left\|f\right\|_{\mathcal{H}^{(p,q)}}\lesssim_{\varphi,d,q,p,s}\left\|\sum_{n\geq 0}\bigg(   \frac{|\lambda_{n}|}{\left\|\chi_{Q_{n}}\right\|_{p}}\bigg)^{\eta}\chi_{Q_{n}}\right\|_{\frac{p}{\eta},\frac{q}{\eta}}^{\frac{1}{\eta}}.
\end{equation*}
\end{thm}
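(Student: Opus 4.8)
The plan is to reduce the statement to its atomic analogue: I will decompose each molecule $\mathrm m_n$ into a sum of $(p,r,s)$-atoms with geometrically decaying coefficients, collect all these atoms into one family, and then control the resulting coefficient functional by the original one by means of a powered Hardy--Littlewood maximal operator on the Wiener amalgam space $(L^{p/\eta},\ell^{q/\eta})(\mathbb R^d)$.

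First I would fix a single pair $(\mathrm m,Q)\in\mathcal M\ell(p,r,s)$ and perform the classical molecule-to-atom splitting. Writing $Q_k=2^k\widetilde Q$ for $k\ge0$ and $C_0=Q_0$, $C_k=Q_k\setminus Q_{k-1}$ for $k\ge1$, I decompose $\mathrm m=\sum_{k\ge0}\mathrm m\chi_{C_k}$. Each piece is supported in $Q_k$ but lacks vanishing moments, so I subtract the unique correction $\sum_{|\beta|\le s}m_\beta^k\phi_\beta^k$, where $m_\beta^k=\int_{C_k}\mathrm m(x)x^\beta\,dx$ and $\{\phi_\beta^k\}_{|\beta|\le s}$ is the basis of polynomials supported in $Q_k$ dual to the monomials, i.e. $\int_{Q_k}\phi_\beta^k(x)x^\gamma\,dx=\delta_{\beta\gamma}$ (so that $\|\phi_\beta^k\|_\infty\approx\ell(Q_k)^{-d-|\beta|}$). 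Then $N_k:=\mathrm m\chi_{C_k}-\sum_{|\beta|\le s}m_\beta^k\phi_\beta^k$ is supported in $Q_k$ with vanishing moments up to order $s$. Using condition (3) of Definition~\ref{mole1}, so that $\sum_k m_\beta^k=\int_{\mathbb R^d}x^\beta\mathrm m\,dx=0$, I rewrite the accumulated corrections by Abel summation as $\sum_{|\beta|\le s}\sum_{k\ge0}\tau_\beta^k(\phi_\beta^k-\phi_\beta^{k+1})$ with $\tau_\beta^k=-\sum_{j>k}m_\beta^j$; each $\phi_\beta^k-\phi_\beta^{k+1}$ is supported in $Q_{k+1}$ and again has vanishing moments up to order $s$. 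This yields $\mathrm m=\sum_{k\ge0}N_k+\sum_{|\beta|\le s}\sum_{k\ge0}\tau_\beta^k(\phi_\beta^k-\phi_\beta^{k+1})$, a decomposition into multiples of $(p,r,s)$-atoms carried by the dilates $Q_k$.

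The quantitative heart is the size control. Condition (1) handles the base term $k=0$, while the pointwise decay (2) gives $|m_\beta^k|\lesssim|Q|^{1-\frac1p+\frac{|\beta|}d}2^{-k(d+s+3)}$, hence the same bound for $|\tau_\beta^k|$. Combining these with the normalisations above and rescaling each atom so that $\|\a\|_r\le|Q'|^{\frac1r-\frac1p}$ on its cube $Q'\in\{Q_k,Q_{k+1}\}$, I find that the associated coefficients $\mathfrak c$ satisfy $\frac{|\mathfrak c|}{\|\chi_{Q'}\|_p}\lesssim 2^{-k\rho}\,\frac1{\|\chi_Q\|_p}$ with $\rho\ge 2d+s+3$, the powers of $|Q|$ and the scale $2^{kd/p}$ cancelling exactly. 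Applying this to every $\mathrm m_n$ and collecting all the atoms into one family $\{(\a_{n,k},Q_{n,k})\}$, $Q_{n,k}\approx 2^k\widetilde{Q_n}$, with coefficients $\Lambda_{n,k}$, I reduce the statement to the reconstruction theorem for atoms in $\mathcal H^{(p,q)}(\mathbb R^d)$ (which is proved directly and is not circular here, $\mathcal A(p,r,s)\subset\mathcal M\ell(p,r,s)$), together with the estimate
\[
\Big\|\sum_{n,k}\Big(\tfrac{|\Lambda_{n,k}|}{\|\chi_{Q_{n,k}}\|_p}\Big)^\eta\chi_{Q_{n,k}}\Big\|_{\frac p\eta,\frac q\eta}\lesssim\Big\|\sum_{n}\Big(\tfrac{|\lambda_{n}|}{\|\chi_{Q_{n}}\|_p}\Big)^\eta\chi_{Q_{n}}\Big\|_{\frac p\eta,\frac q\eta}.
\]

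Proving this last inequality is the step I expect to be the main obstacle, since it is where the spread of the atom supports across dyadic scales must be absorbed inside the mixed norm. Summing the geometric series in $k$ for each fixed $n$, and using $\sum_{k\ge0}2^{-k\rho\eta}\chi_{2^k\widetilde{Q_n}}(x)\lesssim(1+|x-x_{Q_n}|/\ell(Q_n))^{-\rho\eta}$, I dominate the left integrand by $\sum_n c_n(1+|x-x_{Q_n}|/\ell(Q_n))^{-\rho\eta}$ with $c_n=(|\lambda_n|/\|\chi_{Q_n}\|_p)^\eta$. I then invoke the standard pointwise lemma asserting that, for $0<\theta<1$ with $\rho\eta\theta>d$, this sum is $\lesssim\big[\mathfrak M\big((\sum_n c_n\chi_{Q_n})^\theta\big)\big]^{1/\theta}$. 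The exponents can be matched because $s\ge\lfloor d(\tfrac1p-1)\rfloor$ forces $\tfrac dp\le d+s+1<\rho$, so the interval $(\tfrac{d}{\rho\eta},\tfrac p\eta)$ is nonempty; choosing $\theta$ therein, the powered operator $g\mapsto[\mathfrak M(|g|^\theta)]^{1/\theta}$ is bounded on $(L^{p/\eta},\ell^{q/\eta})(\mathbb R^d)$, since $\tfrac p{\eta\theta}>1$ and $\tfrac q{\eta\theta}>1$ (using $p\le q$ and the boundedness of $\mathfrak M$ on amalgam spaces with first index larger than $1$). Taking $\|\cdot\|_{p/\eta,q/\eta}$ norms yields the displayed bound, whose right-hand side is exactly the finite quantity in \eqref{reconsmolecule2}. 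Convergence in $\mathcal S'(\mathbb R^d)$ and in $\mathcal H^{(p,q)}(\mathbb R^d)$, and the asserted norm control, then follow from the atomic reconstruction theorem applied to the absolutely summable family $\{(\a_{n,k},Q_{n,k})\}$, the regrouping of the double series being legitimate by this same absolute convergence.
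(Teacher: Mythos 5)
You should first know that the paper contains no proof of this statement: it is imported verbatim from \cite[Theorem 5.3]{AbFe}, so your attempt can only be judged on its own terms and against the way the paper manipulates such sums elsewhere (Theorem~\ref{4}, Proposition~\ref{reconstmolecule2}). Your molecule-to-atom reduction itself is the classical Taibleson--Weiss argument and is sound: the annuli splitting, the moment corrections by a dual polynomial basis, the Abel summation using condition (3) of Definition~\ref{mole1}, and the coefficient bookkeeping (the powers of $|Q|$ and $2^{kd/p}$ do cancel, leaving a decay $2^{-k\rho}$ with $\rho=2d+s+3$) all check out.

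The genuine gap is exactly at the step you yourself flagged as the main obstacle: the ``standard pointwise lemma'' you invoke, namely $\sum_n c_n\bigl(1+|x-x_{Q_n}|/\ell(Q_n)\bigr)^{-\rho\eta}\lesssim\bigl[\mathfrak M\bigl((\sum_n c_n\chi_{Q_n})^{\theta}\bigr)(x)\bigr]^{1/\theta}$ for $\rho\eta\theta>d$, is \emph{false} for arbitrary families of cubes. Take $x=0$, $c_n=1$ and $Q_n$ centered at $(2^n,0,\dots,0)$ with $\ell(Q_n)=2^n$, $n=1,\dots,M$: every term on the left equals $2^{-\rho\eta}$, so the left-hand side is $M2^{-\rho\eta}$, while $\sum_n\chi_{Q_n}\le 2$ and its support meets any ball $B(0,r)$ in measure $\lesssim r^d$, so the right-hand side is bounded by a constant independent of $M$; no uniform constant can exist. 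The Frazier--Jawerth-type lemma you have in mind holds for each single $n$ (i.e.\ with $\sup_n$ on the left), or for pairwise disjoint dyadic cubes of one fixed generation, but your $Q_n$ are the arbitrary, possibly heavily overlapping, multi-scale cubes of the molecules. The repair is to keep the sum \emph{outside} the maximal operator: since $c_n\bigl(1+|x-x_{Q_n}|/\ell(Q_n)\bigr)^{-\rho\eta}\approx\bigl[\mathfrak M\bigl(c_n^{1/w}\chi_{Q_n}\bigr)(x)\bigr]^{w}$ with $w=\rho\eta/d$, the vector-valued Fefferman--Stein inequality in amalgam spaces (Lemma~\ref{tech}~(3), i.e.\ \cite[Proposition 11.12]{YlYs}) gives $\bigl\|\sum_n(\mathfrak M g_n)^{w}\bigr\|_{\frac p\eta,\frac q\eta}\lesssim\bigl\|\sum_n|g_n|^{w}\bigr\|_{\frac p\eta,\frac q\eta}$ provided $w>1$, $wp/\eta=\rho p/d>1$ and $wq/\eta>1$; the last two conditions always hold here, and this is precisely how the present paper treats the analogous sums in the proof of Theorem~\ref{4}. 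Note, however, that $w>1$ forces $\eta>d/(2d+s+3)$, so even the repaired argument covers only part of the stated range $0<\eta\le p$, and the regime of very small $\eta$ would need a separate argument.
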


\section{Boundedness of Calder\'on-Zygmund opertors in Fofana-Hardy spaces}
To perform in \cite{AbFe2} the proof of the boundedness of Calderon-Zygmund operators in Hardy-amalgam spaces, the authors used estimates that we gather here in a lemma.
\begin{lem}\label{tech}\begin{enumerate}
    \item \cite[Theorem 4.2]{AbFe2} Let $T$ be a $\delta$-Calder\'on-Zygmund operator with kernel $K$, $\frac{d}{d+\delta}<p\leq 1$, $r>\max(2,q)$. If $(\mathfrak a,Q)\in\mathcal A(p,r,s)$ then,
$$\vert T(\mathfrak a)(x)\vert\lesssim \frac{\left[\mathfrak M(\chi_Q)(x)\right]^{\frac{d+\delta}{d}}}{\vert Q\vert^{\frac{1}{p}}}, \  x\notin \tilde{Q}$$
\item \cite[Proposition 4.5]{AbFe} Let $1<\min(u,v)\leq \max(u,v)<s\leq \infty$. For every sequence $(\lambda_n)_n$ of elements of $\mathbb C$ and $\left(b_n\right)_n$ of elements of $L^s(\mathbb R^d)$ such that $\mathrm{supp } \ b_n\subset Q_n$ (\text{a cube }) for all $n$ and $\left\| b_n\right\|_{s}\leq \vert Q_n\vert^{\frac{1}{s}-\frac{1}{u}}$, we have 
$$\left\|\sum_{n\geq 0}\vert \lambda_nb_n\vert\right\|_{u,v}\lesssim \left\|\sum_{n\geq 0}\frac{\vert \lambda_n\vert}{\vert Q_n\vert^{\frac{1}{u}}}\chi_{Q_n}\right\|_{u,v},$$
with the implicit constants not depending on the sequences.
\item \cite[Proposition 11.12]{YlYs} Let $1 <s < \infty$ and $1 < t, u \leq \infty$. Then, for any sequence $(f_n)_n$ of measurable functions, we have
$$\left\|\left[\sum_{n\geq 0}\left(\mathfrak M(f_n)\right)^u\right]^{\frac{1}{u}}\right\|_{s,t}\approx \left\|\left[\sum_{n\geq 0}\vert f_n\vert^u\right]^{\frac{1}{u}}\right\|_{s,t},$$
with the implicit constants not depending on the sequence.
\end{enumerate}
\end{lem}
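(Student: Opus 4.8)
Since the three items are quoted verbatim from \cite[Theorem 4.2]{AbFe2}, \cite[Proposition 4.5]{AbFe} and \cite[Proposition 11.12]{YlYs} respectively, the statement is a compilation assembled for later use, and no essentially new argument is required: for items $(2)$ and $(3)$ I would simply appeal to the cited propositions, the first being a Hölder-type passage from a sum of normalized bumps to the sum of normalized characteristic functions in the amalgam norm (using $\max(u,v)<s$), and the second the Fefferman--Stein vector-valued maximal inequality transplanted to amalgam spaces, whose easy direction is $|f_n|\leq\mathfrak M(f_n)$ and whose hard direction rests on the boundedness of $\mathfrak M$ on $(L^s,\ell^t)$. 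For completeness I would indicate how $(1)$, the estimate genuinely specific to Calder\'on--Zygmund operators and the one driving the boundedness proofs, is obtained.

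The plan for $(1)$ is to exploit the vanishing moment of the atom together with the H\"older regularity \eqref{2} of the kernel. Fix $(\a,Q)\in\mathcal A(p,r,s)$ and $x\notin\tilde Q=2\sqrt d\,Q$. Since $s\geq 0$, we have $\int_Q\a(y)\,dy=0$, so I would write
$$T(\a)(x)=\int_Q\big[K(x,y)-K(x,x_Q)\big]\a(y)\,dy.$$
The enlargement factor $2\sqrt d$ is chosen precisely so that for $y\in Q$ and $x\notin\tilde Q$ one has $|x-y|\approx|x-x_Q|\gtrsim\ell(Q)\gtrsim|y-x_Q|$ and the separation hypothesis of \eqref{2} is met; applying \eqref{2} then yields
$$|K(x,y)-K(x,x_Q)|\lesssim\frac{|y-x_Q|^{\delta}}{|x-y|^{d+\delta}}\lesssim\frac{\ell(Q)^{\delta}}{|x-x_Q|^{d+\delta}}\approx\frac{|Q|^{\delta/d}}{|x-x_Q|^{d+\delta}}.$$
Inserting this estimate and controlling $\|\a\|_1\leq|Q|^{1-\frac1r}\|\a\|_r\leq|Q|^{1-\frac1p}$ by H\"older's inequality and the size condition of the atom, I would obtain
$$|T(\a)(x)|\lesssim\frac{|Q|^{\delta/d}}{|x-x_Q|^{d+\delta}}\,|Q|^{1-\frac1p}=\frac{|Q|^{1+\frac{\delta}{d}-\frac1p}}{|x-x_Q|^{d+\delta}}.$$

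To close the argument I would recall that for $x\notin\tilde Q$ the Hardy--Littlewood maximal function obeys $\mathfrak M(\chi_Q)(x)\approx|Q|/|x-x_Q|^{d}$, whence $[\mathfrak M(\chi_Q)(x)]^{\frac{d+\delta}{d}}\approx|Q|^{\frac{d+\delta}{d}}/|x-x_Q|^{d+\delta}$, and dividing by $|Q|^{\frac1p}$ reproduces exactly the exponent $1+\frac{\delta}{d}-\frac1p$ found above; this matches the right-hand side of $(1)$. The only delicate points are the geometric verification that $x\notin\tilde Q$ and $y\in Q$ legitimately trigger \eqref{2} with $|x-y|\approx|x-x_Q|$, and the careful bookkeeping of the powers of $|Q|$; the hypothesis $\frac{d}{d+\delta}<p$ is what makes the decay $|x-x_Q|^{-(d+\delta)}$ strong enough for $T(\a)$ to be usefully integrable away from $\tilde Q$. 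I do not foresee a genuine obstacle beyond these routine verifications, the substance of the lemma residing in the three cited results.
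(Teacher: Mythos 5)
Your proposal is correct, and the comparison here is somewhat degenerate: the paper offers no proof of Lemma \ref{tech} at all — the lemma is explicitly a compilation of three results quoted with their citations, which is exactly how you treat items (2) and (3). The added value of your write-up is the direct argument for item (1), which is the standard cancellation-plus-kernel-regularity proof, and your exponent bookkeeping checks out: $\left\|\mathfrak a\right\|_1\leq |Q|^{1-\frac1p}$ by H\"older and the size condition, $\mathfrak M(\chi_Q)(x)\approx |Q|\,|x-x_Q|^{-d}$ for $x\notin\tilde Q$, and both sides reduce to $|Q|^{1+\frac{\delta}{d}-\frac1p}|x-x_Q|^{-(d+\delta)}$. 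One wrinkle you flag but should resolve explicitly: condition \eqref{2} as stated requires $|x-y|\geq 2|y-x_Q|$, and this can genuinely fail for $x$ just outside $\tilde Q=2\sqrt d\,Q$ (in $d=1$, take $Q=[-\tfrac12,\tfrac12]$, $y=\tfrac12$, $x=1+\epsilon$: then $|x-y|=\tfrac12+\epsilon<1=2|y-x_Q|$). The fix is to apply \eqref{2} to the pair $(x_Q,y)$ rather than $(y,x_Q)$, i.e. use the hypothesis $|x-x_Q|\geq 2|x_Q-y|$, which does hold for all $x\notin\tilde Q$ and $y\in Q$, and which yields $|K(x,x_Q)-K(x,y)|\lesssim \ell(Q)^{\delta}|x-x_Q|^{-(d+\delta)}$ directly, without needing $|x-y|\approx|x-x_Q|$; this symmetric application is precisely how the paper itself invokes \eqref{2} later, in the proof of Theorem \ref{Agosty}. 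Finally, you are right that the hypothesis $\frac{d}{d+\delta}<p$ plays no role in the pointwise estimate (1) itself; it is only used downstream, to guarantee that $\left[\mathfrak M(\chi_Q)\right]^{\frac{d+\delta}{d}}$ has finite $L^p$ (and amalgam) quasi-norm, since $\frac{(d+\delta)p}{d}>1$.
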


We are now ready to establish and to prove our result for generalized Hardy-Morrey spaces.
\begin{thm}\label{4}
Let $T$ be a $\delta$-Calder\'on-Zygmund operator.
 The extension of the operator $T$ defined on $\mathcal H^{(p,q)}(\mathbb R^d)$ is bounded from  $\mathcal{H}^{(p,q,\alpha)}(\mathbb R^d)$ into 
 $(L^{p}, \ell^{q})^{\alpha}(\mathbb R^d)$ whenever $\frac{d}{d+\delta}< p \leq 1.$
\end{thm}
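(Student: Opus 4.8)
The plan is to argue through a finite atomic decomposition, since a general $\delta$-Calder\'on--Zygmund operator need not commute with the dilations $St^\alpha_\rho$, so the abstract transference principle of Section 2 is unavailable. Using the equivalence of $\|\cdot\|_{\mathcal H^{(p,q,\alpha)}}$ with the finite atomic quasi-norm established in \cite{DmFe}, I fix $r>\max(2,q)$, an integer $s\ge\lfloor d(\frac1p-1)\rfloor$, and a number $\eta$ with $\frac{d}{d+\delta}<\eta<p$ (which exists precisely because $p>\frac{d}{d+\delta}$), and reduce to proving, for every finite decomposition $f=\sum_n\lambda_n\mathfrak{a}_n$ with $(\mathfrak{a}_n,Q_n)\in\mathcal A(p,\alpha,r,s)$, the bound $\|Tf\|_{(L^p,\ell^q)^\alpha}\lesssim\big\|\sum_n(\frac{|\lambda_n|}{|Q_n|^{1/\alpha}})^\eta\chi_{Q_n}\big\|_{\frac p\eta,\frac q\eta,\frac\alpha\eta}^{1/\eta}$; taking the infimum over decompositions and passing to the limit by density then finishes the proof. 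With $\tilde Q_n:=2\sqrt d\,Q_n$ I split $|Tf|\le F_{\mathrm{far}}+F_{\mathrm{near}}$, where $F_{\mathrm{far}}=\sum_n|\lambda_n|\,|T\mathfrak{a}_n|\chi_{\tilde Q_n^c}$ and $F_{\mathrm{near}}=\sum_n|\lambda_n|\,|T\mathfrak{a}_n|\chi_{\tilde Q_n}$, and treat the two pieces separately via the $p$-subadditivity of $\|\cdot\|_{(L^p,\ell^q)^\alpha}^p$.

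For $F_{\mathrm{far}}$ I transfer Lemma \ref{tech}(1) to $(p,\alpha,r,s)$-atoms: since $(\mathfrak{a}_n,Q_n)\in\mathcal A(p,\alpha,r,s)$ is equivalent to $(|Q_n|^{\frac1\alpha-\frac1p}\mathfrak{a}_n,Q_n)\in\mathcal A(p,r,s)$, linearity gives $|T\mathfrak{a}_n(x)|\lesssim|Q_n|^{-\frac1\alpha}[\mathfrak{M}(\chi_{Q_n})(x)]^{\frac{d+\delta}{d}}$ for $x\notin\tilde Q_n$. Writing $\kappa:=\frac{d}{d+\delta}$ and $h_n:=(\frac{|\lambda_n|}{|Q_n|^{1/\alpha}})^\kappa\chi_{Q_n}$, this says $F_{\mathrm{far}}\lesssim\sum_n[\mathfrak{M}(h_n)]^{1/\kappa}$, and $\eta$-subadditivity with $u:=\eta/\kappa>1$ (using $\eta>\kappa$) yields $F_{\mathrm{far}}^\eta\lesssim\sum_n(\mathfrak{M}h_n)^u$. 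Using the homogeneity $\|\,|g|^\theta\|_{\frac a\theta,\frac b\theta,\frac c\theta}=\|g\|_{a,b,c}^\theta$ of the Fofana norms $\|\cdot\|_{a,b,c}:=\|\cdot\|_{(L^a,\ell^b)^c}$, I reduce $\|F_{\mathrm{far}}\|_{(L^p,\ell^q)^\alpha}^\eta$ to $\big\|[\sum_n(\mathfrak{M}h_n)^u]^{1/u}\big\|_{\frac p\kappa,\frac q\kappa,\frac\alpha\kappa}^u$, and the Fofana analogue of the vector-valued maximal inequality Lemma \ref{tech}(3) replaces $\mathfrak{M}h_n$ by $h_n$; since $\sum_n|h_n|^u=\sum_n(\frac{|\lambda_n|}{|Q_n|^{1/\alpha}})^\eta\chi_{Q_n}$, this is exactly the target. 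The maximal inequality survives in the Fofana norm because $\mathfrak{M}$ commutes with $St^c_\rho$ for every $c$ (a change of variables gives $\mathfrak{M}\circ St^c_\rho=St^c_\rho\circ\mathfrak{M}$): applying Lemma \ref{tech}(3) to the family $(St^{\alpha/\kappa}_\rho h_n)_n$ and taking $\sup_{\rho>0}$ gives the desired equivalence in $(L^{p/\kappa},\ell^{q/\kappa})^{\alpha/\kappa}$.

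For $F_{\mathrm{near}}$ the $L^r$-boundedness of $T$ gives $\|T\mathfrak{a}_n\chi_{\tilde Q_n}\|_r\lesssim\|\mathfrak{a}_n\|_r\le|Q_n|^{\frac1r-\frac1\alpha}\approx|\tilde Q_n|^{\frac1r-\frac1\alpha}$, so after raising to the power $\eta$ the functions $\beta_n:=|T\mathfrak{a}_n\chi_{\tilde Q_n}|^\eta$ are supported in $\tilde Q_n$ with $\|\beta_n\|_{r/\eta}\lesssim|\tilde Q_n|^{\frac1{r/\eta}-\frac1{\alpha/\eta}}$. I then invoke the Fofana analogue of the reconstruction estimate Lemma \ref{tech}(2) in $(L^{p/\eta},\ell^{q/\eta})^{\alpha/\eta}$ — whose admissible range is guaranteed by $q<r$ and $\eta<p$ — to control $\|F_{\mathrm{near}}^\eta\|_{\frac p\eta,\frac q\eta,\frac\alpha\eta}$ by $\big\|\sum_n(\frac{|\lambda_n|}{|Q_n|^{1/\alpha}})^\eta\chi_{\tilde Q_n}\big\|_{\frac p\eta,\frac q\eta,\frac\alpha\eta}$; replacing the comparable cubes $\tilde Q_n$ by $Q_n$ (which changes the norm by a constant) recovers the target. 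This Fofana reconstruction is again derived from the amalgam statement by dilation, since $St^{\alpha/\eta}_\rho$ sends $\beta_n$ to a function supported in $(\tilde Q_n)^\rho$ with $\|St^{\alpha/\eta}_\rho\beta_n\|_{r/\eta}\le|(\tilde Q_n)^\rho|^{\frac1{r/\eta}-\frac1{\alpha/\eta}}$, so the size normalisation is preserved under the relevant dilation.

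I expect the reconstruction step for $F_{\mathrm{near}}$ to be the main obstacle. In the Hardy-amalgam case of \cite{AbFe2} one has $\alpha=p$, so the cube-normalisation exponent of the atoms coincides with the first integrability index of the amalgam norm and Lemma \ref{tech}(2) applies verbatim. In the genuinely Fofana regime $p<\alpha$ these two exponents differ, and what is actually needed is an amalgam reconstruction inequality in which the $L^{r/\eta}$-sizes of the pieces are normalised by the \emph{Fofana} exponent $\alpha/\eta$ rather than by $p/\eta$. Establishing this mismatched-normalisation version — and checking its compatibility with $St^\alpha_\rho$ while keeping the atomic normalisation intact — is the crux; once it is available, combining the two pieces via $p$-subadditivity, taking the infimum over decompositions, and using density of the finite atoms completes the argument.
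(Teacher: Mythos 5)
Your overall strategy is the same as the paper's: finite atomic decomposition in $\mathcal{H}^{(p,q,\alpha)}_{\mathrm{fin},r,s}$, the near/far splitting relative to $\widetilde{Q}_n=2\sqrt{d}\,Q_n$, transfer of Lemma \ref{tech} (1) to $(p,\alpha,r,s)$-atoms via $(\mathfrak a,Q)\in\mathcal A(p,\alpha,r,s)\Leftrightarrow(|Q|^{\frac1\alpha-\frac1p}\mathfrak a,Q)\in\mathcal A(p,r,s)$, reduction to amalgam estimates at each fixed dilation $\rho$ followed by a supremum over $\rho$, and density at the end. Your treatment of $F_{\mathrm{far}}$ is complete and in fact more careful than the paper's: the paper attributes that step to Lemma \ref{tech} (2), which cannot literally apply since maximal functions are not compactly supported, while your route through Lemma \ref{tech} (3) with $u=\eta/\kappa>1$ and the explicit constraint $\frac{d}{d+\delta}<\eta<p$ is the argument actually needed (and implicitly used) there.

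The genuine gap is the one you flag yourself: you leave the near-term "mismatched-normalisation" reconstruction inequality unproven and call it the crux. But no new lemma is required; the mismatch disappears by the same renormalisation you already used for atoms, now applied to the pieces with the compensating factor absorbed into the coefficients. Concretely, for fixed $\rho>0$ write
\[
\sum_{n}|\lambda_n|^{\eta}\,St^{\alpha/\eta}_{\rho}\beta_{n}
=\sum_{n}\left(|\lambda_n|^{\eta}\,\bigl|\widetilde{Q}_{n}^{\rho}\bigr|^{\frac{\eta}{p}-\frac{\eta}{\alpha}}\right)
\left(\bigl|\widetilde{Q}_{n}^{\rho}\bigr|^{\frac{\eta}{\alpha}-\frac{\eta}{p}}\,St^{\alpha/\eta}_{\rho}\beta_{n}\right);
\]
the second factors are supported in $\widetilde{Q}_{n}^{\rho}$ and satisfy
$\left\|\bigl|\widetilde{Q}_{n}^{\rho}\bigr|^{\frac{\eta}{\alpha}-\frac{\eta}{p}}St^{\alpha/\eta}_{\rho}\beta_{n}\right\|_{\frac{r}{\eta}}\lesssim\bigl|\widetilde{Q}_{n}^{\rho}\bigr|^{\frac{\eta}{r}-\frac{\eta}{p}}$,
which is exactly the hypothesis of Lemma \ref{tech} (2) with first exponent $p/\eta$ (admissible since $\eta<p\leq q<r$). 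Applying that lemma, the inserted factor $\bigl|\widetilde{Q}_{n}^{\rho}\bigr|^{\frac{\eta}{p}-\frac{\eta}{\alpha}}$ cancels against the $\bigl|\widetilde{Q}_{n}^{\rho}\bigr|^{-\frac{\eta}{p}}$ produced by the lemma, leaving
$\left\|St^{\alpha/\eta}_{\rho}\left(\sum_{n}\frac{|\lambda_{n}|^{\eta}}{\left\|\chi_{\widetilde{Q}_{n}}\right\|^{\eta}_{\alpha}}\chi_{\widetilde{Q}_{n}}\right)\right\|_{\frac{p}{\eta},\frac{q}{\eta}}$
with a constant independent of $\rho$; taking $\sup_{\rho>0}$ gives precisely your target bound. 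This is exactly the paper's Estimate (\ref{eq1}), so once you add these two lines your proof is complete, and at the far term it is actually more rigorous than the published one.
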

\begin{proof}
Let $r > \max \{2, q\}$ and an integer 
$ s\geq \lfloor d(\frac{1}{p}-1)\rfloor$. Fix $f\in \mathcal{H}_{\mathrm{fin},r,s}^{(p,q,\alpha)}(\mathbb R^d)$  and let 
$\{(a_{n},Q_{n})\}_{n=0}^{j}\subset \mathcal{A}(p,\alpha,r,s)$ and $\{\lambda_{n}\}_{n=0}^{j}\subset \mathbb C$ such that  $f =\sum_{n=0}^{j} \lambda_{n}a_{n}.$ We have 
\begin{eqnarray*}\vert T(f)(x)\vert&\leq& \sum_{n=0}^{j} \vert\lambda_{n}\vert \vert T(a_{n})(x)\vert  =\sum_{n=0}^{j} \vert\lambda_{n}\vert\vert Q\vert^{\frac{1}{p}-\frac{1}{\alpha}} \vert T(\vert Q\vert^{\frac{1}{\alpha}-\frac{1}{p}}a_{n})(x)\vert \\
&\lesssim & \sum_{n=0}^{j}|\lambda_{n}| \left(|T(a_{n})(x)\chi_{\widetilde{Q}_{n}}(x)|+\frac{[\mathfrak{M}(\chi_{Q_{n}}(x)) ]^{\frac{d+\delta}{d}}}{\left\|\chi_{Q_{n}}\right\|_{\alpha}} \right)
\end{eqnarray*}
 for all  $x\in\mathbb{R}^{d}$, 
 thanks to Lemma \ref{tech} (1). It follows that 
 
\begin{equation}
\left\|T(f)\right\|_{p,q,\alpha}
\lesssim \left\|\sum_{n=0}^{j}|\lambda_{n}| |T(a_{n})|\chi_{\widetilde{Q}_{n}}\right\|_{p,q,\alpha}+\left\|\sum_{n=0}^{j}|\lambda_{n}|\frac{[\mathfrak{M}(\chi_{Q_{n}}) ]^{\frac{d+\delta}{d}}}{\left\|\chi_{Q_{n}}\right\|_{\alpha}} \right\|_{p,q,\alpha}.\label{estimate}
\end{equation}
Let $0<\eta<p$. Fix $\rho>0$. Since  
$$\mathrm{supp } \vert \tilde{Q_n}^\rho\vert^{\frac{\eta}{\alpha}-\frac{\eta}{p}} St^{\frac{\eta}{\alpha}}_\rho \left(\vert T(a_n)\vert \chi_{\tilde{Q_n}^\rho}\right)^\eta\subset \tilde{Q_n}^\rho$$
 and 
 $$\left\| \vert \tilde{Q_n}^\rho\vert^{\frac{\eta}{\alpha}-\frac{\eta}{p}} St^{\frac{\eta}{\alpha}}_\rho \left(\vert T(a_n)\vert \chi_{\tilde{Q_n}^\rho}\right)^\eta\right\|_{\frac{r}{\eta}}\lesssim \vert \tilde{Q_n}^\rho\vert^{\frac{\eta}{r}-\frac{\eta}{p}},$$
we have
\begin{eqnarray*}\left\|St^\alpha_\rho\left(\sum_{n=0}^{j}|\lambda_{n}| |T(a_{n})|\chi_{\widetilde{Q}_{n}}\right)\right\|_{p,q} &\leq& \left\|\sum_{n=0}^{j}|\lambda_{n}|^\eta St^\alpha_\rho\left(|T(a_{n})|\right)^\eta\chi_{\widetilde{Q}^\rho_{n}}\right\|^{\frac{1}{\eta}}_{\frac{p}{\eta},\frac{q}{\eta}}\\
&\leq&\left\|\sum_{n=0}^{j}(\vert \tilde{Q_n}^\rho\vert^{\frac{\eta}{p}-\frac{\eta}{\alpha}}\vert \lambda_{n}\vert^\eta )\left[\vert \tilde{Q_n}^\rho\vert^{\frac{\eta}{\alpha}-\frac{\eta}{p}}St^\alpha_\rho(|T(a_{n})|)^\eta\chi_{\tilde{Q}^\rho_{n}}\right]\right\|^{\frac{1}{\eta}}_{\frac{p}{\eta},\frac{q}{\eta}}\\
&\lesssim & \left\|\sum_{n=0}^{j}\frac{\vert \tilde{Q_n}^\rho\vert^{\frac{\eta}{p}-\frac{\eta}{\alpha}}\vert \lambda_{n}\vert^\eta}{\left\|\chi_{\tilde{Q_n}^\rho }\right\|_{\frac{p}{\eta}}}\chi_{\widetilde{Q}^\rho_{n}}
\right\|^{\frac{1}{\eta}}_{\frac{p}{\eta},\frac{q}{\eta}}=\left\|\sum_{n=0}^{j}St^{\frac{\alpha}{\eta}}_\rho\left(\frac{\vert \lambda_{n}\vert^\eta}{\left\|\chi_{\tilde{Q_n} }\right\|^\eta_{\alpha}}\chi_{\widetilde{Q}_{n}}\right)
\right\|^{\frac{1}{\eta}}_{\frac{p}{\eta},\frac{q}{\eta}}
\end{eqnarray*}
according to Lemma \ref{tech} (2). 
The above inequality being true for all $\rho>0$, we deduce that  
\begin{equation}
\left\|\sum_{n=0}^{j}|\lambda_{n}| |T(a_{n})|\chi_{\widetilde{Q}_{n}}\right\|_{p,q,\alpha}\lesssim \left\|\sum_{n=0}^{j}\left(\frac{\vert \lambda_{n}\vert}{\left\|\chi_{\tilde{Q_n} }\right\|_{\alpha}}\chi_{\widetilde{Q}_{n}}\right)^\eta\right\|^{\frac{1}{\eta}}_{\frac{p}{\eta},\frac{q}{\eta},\frac{\alpha}{\eta}}. \label{eq1}
\end{equation}
Let us estimate now the second term. Let $\rho>0$ and $\eta$ as above. We have 
\begin{eqnarray*}\left\|St^\alpha_\rho\left(\sum_{n=0}^{j}|\lambda_{n}|\frac{[\mathfrak{M}(\chi_{Q_{n}}) ]^{\frac{d+\delta}{d}}}{\left\|\chi_{Q_{n}}\right\|_{\alpha}} \right)\right\|_{p,q}&=&\left\|\left(\rho^{-\frac{d}{\alpha}}\sum_{n=0}^{j}|\lambda_{n}|\frac{[\mathfrak{M}(\chi_{Q_{n}^\rho}) ]^{\frac{d+\delta}{d}}}{\left\|\chi_{Q_{n}}\right\|_{\alpha}} \right)\right\|_{p,q}\\
&\leq &\left\|\left(\sum_{n=0}^{j}|\lambda_{n}|\vert Q_n^\rho\vert^{\frac{1}{p}-\frac{1}{\alpha}}\frac{[\mathfrak{M}(\chi_{Q_{n}^\rho}) ]^{\frac{d+\delta}{d}}}{\left\|\chi_{Q_{n}^\rho}\right\|_{p}} \right)\right\|_{p,q}\\
&\lesssim &\left\|\left(\sum_{n=0}^{j}\left(\frac{|\lambda_{n}|\vert Q_n^\rho\vert^{\frac{1}{p}-\frac{1}{\alpha}}}{\left\|\chi_{Q_{n}^\rho}\right\|_{p}}\right)^\eta \chi_{Q_{n}^\rho} \right)\right\|^{\frac{1}{\eta}}_{\frac{p}{\eta},\frac{q}{\eta}}\\
&\lesssim&\left\|St^\frac{\alpha}{\eta}_\rho\left(\sum_{n=0}^{j}\left(\frac{|\lambda_{n}|}{\left\|\chi_{Q_{n}}\right\|_{\alpha}}\right)^\eta \chi_{Q_{n}} \right)\right\|^{\frac{1}{\eta}}_{\frac{p}{\eta},\frac{q}{\eta}}
\end{eqnarray*}
where the inequality before last comes from Lemma \ref{tech} (2). We also  deduce that 
\begin{equation}\left\|\sum_{n=0}^{j}|\lambda_{n}|\frac{[\mathfrak{M}(\chi_{Q_{n}}) ]^{v}}{\left\|\chi_{Q_{n}}\right\|_{\alpha}} \right\|_{p,q,\alpha} \lesssim \left\|\left(\sum_{n=0}^{j}\left(\frac{|\lambda_{n}|}{\left\|\chi_{Q_{n}}\right\|_{\alpha}}\right)^\eta \chi_{Q_{n}} \right)\right\|^{\frac{1}{\eta}}_{\frac{p}{\eta},\frac{q}{\eta},\frac{\alpha}{\eta}},\label{eq2}
\end{equation}
by taking successively the supremum of the right-hand side and the left-hand side  over $\rho>0$.
Taking Estimates (\ref{eq1}) and (\ref{eq2}) in (\ref{estimate}), we obtain 
\begin{equation*}
\left\|T(f)\right\|_{p,q,\alpha} \lesssim \left\|\sum_{n=0}^{j}\left(\frac{|\lambda_{n}|}{\left\|\chi_{Q_{n}}\right\|_{\alpha}}\right)^{\eta}\chi_{Q_{n}} \right\|_{\frac{p}{\eta},\frac{q}{\eta},\frac{\alpha}{\eta}}^{\frac{1}{\eta}}.
\end{equation*}
Which leads to 
\begin{eqnarray*}
\left\|T(f)\right\|_{p,q,\alpha}\lesssim \left\|f\right\|_{\mathcal{H}_{\mathrm{fin},r,s}^{(p,q,\alpha)}}\lesssim\left\|f\right\|_{\mathcal{H}^{(p,q,\alpha)}},
\end{eqnarray*}
The result follows from de density of $\mathcal H^{(p,q,\alpha)}_{\mathrm{fin},r,s}(\mathbb R^d)$ in $\mathcal H^{(p,q,\alpha)}(\mathbb R^d)$.
\end{proof}

As Theorem \ref{able1} is concerned, we have the following result in the case of Fofana-Hardy spaces.
\begin{thm}\label{fincalderon}
Let $T$ be a Calder\'on-Zygmund operator satisfying the hypothesis of Theorem \ref{able1}. If  $\frac{d}{d+s+1}<p\leq 1$ and $p\leq \alpha\leq q$  then  $T$ is bounded  on $\mathcal{H}^{(p,q,\alpha)}(\mathbb R^d)$.
\end{thm}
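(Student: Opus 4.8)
The plan is to reduce the statement to the $\mathcal{H}^{(p,q)}$-molecular reconstruction already available (Theorem \ref{Able2}), with the dilation $St^\alpha_\rho$ inserted at the level of molecules rather than through $T$. The naive route---combining Theorem \ref{able1} with the commutation remark of Section 2---fails here, because a Calder\'on--Zygmund operator satisfying the hypotheses of Theorem \ref{able1} carries no homogeneity assumption and hence need not commute with $St^\alpha_\rho$. I first fix $r$ with $\max\{1,q\}<r<\infty$ and an integer $s\geq\lfloor d(\frac1p-1)\rfloor$; the assumption $\frac{d}{d+s+1}<p$ guarantees $d(\frac1p-1)\leq s$, so that Lemma \ref{atom_mol} and Theorem \ref{Able2} both apply with these parameters. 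Working on the dense subspace of continuous finite atomic combinations in $\mathcal{H}^{(p,q,\alpha)}_{\mathrm{fin},\infty,d+2s+2}(\mathbb R^d)$ (using the $\alpha$-analogue of the finite atomic norm equivalence of \cite{DmFe}, valid on continuous functions when $r=\infty$), I fix $f=\sum_{n=0}^{j}\lambda_n a_n$ with $(a_n,Q_n)\in\mathcal{A}(p,\alpha,\infty,d+2s+2)$, and interpret $T(f)$ via the bounded extension on $\mathcal{H}^{(p,q)}(\mathbb R^d)\supset\mathcal{H}^{(p,q,\alpha)}(\mathbb R^d)$ furnished by Theorem \ref{able1}.

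The first step is to recognise the image of an $\alpha$-atom as an $\alpha$-molecule. Since $(|Q_n|^{\frac1\alpha-\frac1p}a_n,Q_n)\in\mathcal{A}(p,\infty,d+2s+2)$, Lemma \ref{atom_mol} gives $(C_1^{-1}|Q_n|^{\frac1\alpha-\frac1p}T(a_n),Q_n)\in\mathcal{M}\ell(p,r,s)$. Checking the three defining conditions directly shows that $(\mathrm{m},Q)\in\mathcal{M}\ell(p,\alpha,r,s)$ if and only if $(|Q|^{\frac1\alpha-\frac1p}\mathrm{m},Q)\in\mathcal{M}\ell(p,r,s)$, so $\mathrm{m}_n:=C_1^{-1}T(a_n)$ is a $(p,\alpha,r,s)$-molecule centered on $Q_n$ and $T(f)=C_1\sum_n\lambda_n\mathrm{m}_n$.

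Next I estimate $\|T(f)\|_{\mathcal H^{(p,q,\alpha)}}=\sup_{\rho>0}\|St^\alpha_\rho T(f)\|_{\mathcal H^{(p,q)}}$. The molecule classes are covariant under $St^\alpha_\rho$: the computation carried out for atoms in Section 2 shows verbatim that $(St^\alpha_\rho\mathrm{m}_n,Q_n^\rho)\in\mathcal{M}\ell(p,\alpha,r,s)$, whence $\mathrm{m}_n^\rho:=|Q_n^\rho|^{\frac1\alpha-\frac1p}St^\alpha_\rho\mathrm{m}_n$ is a $(p,p,r,s)$-molecule on $Q_n^\rho$ and $St^\alpha_\rho\mathrm{m}_n=|Q_n^\rho|^{\frac1p-\frac1\alpha}\mathrm{m}_n^\rho$. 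For each fixed $\rho$, writing $St^\alpha_\rho T(f)=C_1\sum_n(\lambda_n|Q_n^\rho|^{\frac1p-\frac1\alpha})\mathrm{m}_n^\rho$ and invoking Theorem \ref{Able2} (with $0<\eta<p$) yields, since $\tfrac{|\lambda_n||Q_n^\rho|^{1/p-1/\alpha}}{\|\chi_{Q_n^\rho}\|_p}=\tfrac{|\lambda_n|}{\|\chi_{Q_n^\rho}\|_\alpha}$,
\begin{equation*}
\|St^\alpha_\rho T(f)\|_{\mathcal H^{(p,q)}}\lesssim\left\|\sum_{n}\left(\frac{|\lambda_n|}{\|\chi_{Q_n^\rho}\|_\alpha}\right)^\eta\chi_{Q_n^\rho}\right\|^{\frac1\eta}_{\frac p\eta,\frac q\eta}.
\end{equation*}
Exactly as in the proof of Theorem \ref{4}, the identity $\frac{|\lambda_n|^\eta}{\|\chi_{Q_n^\rho}\|_\alpha^\eta}\chi_{Q_n^\rho}=St^{\frac\alpha\eta}_\rho\big(\frac{|\lambda_n|^\eta}{\|\chi_{Q_n}\|_\alpha^\eta}\chi_{Q_n}\big)$ lets me take the supremum over $\rho>0$ and recover the triple-index Fofana quasi-norm,
\begin{equation*}
\|T(f)\|_{\mathcal H^{(p,q,\alpha)}}\lesssim\left\|\sum_{n}\left(\frac{|\lambda_n|}{\|\chi_{Q_n}\|_\alpha}\right)^\eta\chi_{Q_n}\right\|^{\frac1\eta}_{\frac p\eta,\frac q\eta,\frac\alpha\eta}.
\end{equation*}
Taking the infimum over decompositions bounds the right-hand side by $\|f\|_{\mathcal H^{(p,q,\alpha)}_{\mathrm{fin},\infty,d+2s+2}}\approx\|f\|_{\mathcal H^{(p,q,\alpha)}}$, and a standard density argument (using that $T$ is already bounded on $\mathcal H^{(p,q)}(\mathbb R^d)$ by Theorem \ref{able1}) extends the inequality to all of $\mathcal H^{(p,q,\alpha)}(\mathbb R^d)$.

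The principal obstacle is precisely the non-commutation of $T$ with $St^\alpha_\rho$: the argument works only because the dilation is pushed through the molecular decomposition instead of through $T$, which requires the covariance of $\mathcal{M}\ell(p,\alpha,r,s)$ under both the normalization $\mathrm{m}\mapsto|Q|^{\frac1\alpha-\frac1p}\mathrm{m}$ and the dilation $St^\alpha_\rho$, together with the uniformity in $\rho$ that lets the supremum be reassembled into the Fofana quasi-norm. A secondary technical point is the $r=\infty$ finite atomic norm equivalence, which confines the density step to the subspace of continuous finite atomic combinations.
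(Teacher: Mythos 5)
Your proof is correct and follows essentially the same route as the paper: image atoms become $(p,\alpha,r,s)$-molecules via Lemma \ref{atom_mol} together with the normalization $\vert Q\vert^{\frac{1}{\alpha}-\frac{1}{p}}$, the dilation $St^\alpha_\rho$ is pushed onto the molecules so that Theorem \ref{Able2} can be applied at each fixed scale $\rho$ before taking the supremum, and the conclusion follows by the finite atomic norm equivalence and density. The only difference is organizational: the uniform-in-$\rho$ reconstruction estimate you prove inline is exactly the paper's Proposition \ref{reconstmolecule2}, which the paper isolates as a separate statement and then invokes.
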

For the proof, we will need the analogue of Theorem \ref{Able2}  
in $\mathcal{H}^{(p,q,\alpha)}(\mathbb R^d)$. 
For this purpose, we use the fact that 
 $\left(\mathfrak{m},Q\right) \in\mathcal M\ell(p,\alpha,r,s)$ if and only if  $\left( \vert Q\vert^{\frac{1}{\alpha}-\frac{1}{p}}\mathfrak{m},Q\right) \in\mathcal M\ell(p,r,s) $, and  that $\left(St^\alpha_\rho \left(\vert Q^\rho\vert^{\frac{1}{\alpha}-\frac{1}{p}}\mathfrak{m}\right),Q^\rho\right) \in\mathcal M\ell(p,r,s)$ whenever $\left(\mathfrak{m},Q\right) \in\mathcal M\ell(p,\alpha,r,s)$, where  $Q^\rho=Q(\rho x_Q,\rho\ell(Q))$. 

\begin{prop}\label{reconstmolecule2}
Let $0<p\leq \min(q,1)\leq\max(1,q)<r<\infty$,   $0<\eta\leq p$ and an integer $s\geq\lfloor d(\frac{1}{p}-1)\rfloor$. For all sequences $\{(\mathrm{m}_{n},Q_{n})\}_{n\geq 0}$ of elements of 
 $\mathcal{M}\ell(p,\alpha,r,s)$ and all sequences of scalars $\{\lambda_{n}\}_{n\geq 0}$ such that 
\begin{equation}\label{reconstmolecule3}
\left\|\sum_{n\geq 0}\bigg(   \frac{|\lambda_{n}|}{\left\|\chi_{Q_{n}}\right\|_{\alpha}}\bigg)^{\eta}\chi_{Q_{n}}\right\|_{\frac{p}{\eta},\frac{q}{\eta},\frac{\alpha}{\eta}}< \infty,
\end{equation}
 the series $f:=\sum_{n\geq 0}\lambda_{n}\mathrm{m}_{n}$  converge in $\mathcal{S'}$ and in $\mathcal{H}^{(p,q,\alpha)},$ with
\begin{eqnarray*}
\left\|f\right\|_{\mathcal{H}^{(p,q,\alpha)}}\lesssim_{d,q,p,s}\left\|\sum_{n\geq 0}\bigg(   \frac{|\lambda_{n}|}{\left\|\chi_{Q_{n}}\right\|_{\alpha}}\bigg)^{\eta}\chi_{Q_{n}}\right\|_{\frac{p}{\eta},\frac{q}{\eta},\frac{\alpha}{\eta}}^{\frac{1}{\eta}}
\end{eqnarray*}
\end{prop}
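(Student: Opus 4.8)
The plan is to reduce the statement to its already-established amalgam counterpart, Theorem \ref{Able2}, by exploiting the defining relation $\left\|g\right\|_{\mathcal{H}^{(p,q,\alpha)}}=\sup_{\rho>0}\left\|St^\alpha_\rho g\right\|_{\mathcal{H}^{(p,q)}}$, exactly in the spirit of the proof of Theorem \ref{4}. Fix $\rho>0$. Since $St^\alpha_\rho$ is linear, $St^\alpha_\rho f=\sum_{n\geq 0}\lambda_n St^\alpha_\rho\mathrm{m}_n$, and the task is to recognize the right-hand side as a molecular sum in $\mathcal{M}\ell(p,r,s)$ to which Theorem \ref{Able2} applies. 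To this end I would use the covariance property recalled just before the statement: if $(\mathrm{m}_n,Q_n)\in\mathcal{M}\ell(p,\alpha,r,s)$, then $(\tilde{\mathrm{m}}_n^\rho,Q_n^\rho)\in\mathcal{M}\ell(p,r,s)$, where $\tilde{\mathrm{m}}_n^\rho:=St^\alpha_\rho(|Q_n^\rho|^{\frac1\alpha-\frac1p}\mathrm{m}_n)$ and $Q_n^\rho=Q(\rho x_{Q_n},\rho\ell(Q_n))$. By linearity this gives $St^\alpha_\rho\mathrm{m}_n=|Q_n^\rho|^{\frac1p-\frac1\alpha}\tilde{\mathrm{m}}_n^\rho$, whence $St^\alpha_\rho f=\sum_{n\geq 0}\tilde\lambda_n^\rho\tilde{\mathrm{m}}_n^\rho$ with $\tilde\lambda_n^\rho:=\lambda_n|Q_n^\rho|^{\frac1p-\frac1\alpha}$.

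The decisive computation is the behaviour of the control function. Using $\left\|\chi_{Q_n^\rho}\right\|_p=|Q_n^\rho|^{1/p}$ and $\left\|\chi_{Q_n^\rho}\right\|_\alpha=|Q_n^\rho|^{1/\alpha}$ one checks $\frac{|\tilde\lambda_n^\rho|}{\left\|\chi_{Q_n^\rho}\right\|_p}=\frac{|\lambda_n|}{\left\|\chi_{Q_n^\rho}\right\|_\alpha}$, and by the scaling $\left\|\chi_{Q_n^\rho}\right\|_\alpha^\eta=\rho^{d\eta/\alpha}\left\|\chi_{Q_n}\right\|_\alpha^\eta$ together with $\chi_{Q_n^\rho}(x)=\chi_{Q_n}(\rho^{-1}x)$, the control function factors as
$$\sum_{n\geq 0}\Big(\frac{|\tilde\lambda_n^\rho|}{\left\|\chi_{Q_n^\rho}\right\|_p}\Big)^\eta\chi_{Q_n^\rho}=St^{\frac{\alpha}{\eta}}_\rho\Big(\sum_{n\geq 0}\Big(\frac{|\lambda_n|}{\left\|\chi_{Q_n}\right\|_\alpha}\Big)^\eta\chi_{Q_n}\Big).$$
Taking the $\left\|\cdot\right\|_{\frac{p}{\eta},\frac{q}{\eta}}$-norm and then the supremum over $\rho>0$ identifies the result with the left-hand side of (\ref{reconstmolecule3}), which is finite by hypothesis; in particular the hypothesis of Theorem \ref{Able2} holds for every $\rho>0$ with a bound independent of $\rho$.

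Then, for each $\rho>0$, Theorem \ref{Able2} yields $\left\|St^\alpha_\rho f\right\|_{\mathcal{H}^{(p,q)}}\lesssim_{\varphi,d,q,p,s}\left\|St^{\frac{\alpha}{\eta}}_\rho(\sum_{n\geq 0}(\frac{|\lambda_n|}{\left\|\chi_{Q_n}\right\|_\alpha})^\eta\chi_{Q_n})\right\|_{\frac{p}{\eta},\frac{q}{\eta}}^{\frac1\eta}$. Since the implicit constant in Theorem \ref{Able2} depends only on $\varphi,d,q,p,s$ and not on the molecules or the scalars, it is uniform in $\rho$; taking the supremum over $\rho>0$ and invoking the definitions of $\left\|\cdot\right\|_{\mathcal{H}^{(p,q,\alpha)}}$ and of $\left\|\cdot\right\|_{\frac{p}{\eta},\frac{q}{\eta},\frac{\alpha}{\eta}}$ gives the claimed norm estimate. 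For the convergence statements, taking $\rho=1$ (for which $St^\alpha_1$ is the identity, so $Q_n^1=Q_n$ and $St^\alpha_1 f=f$) in Theorem \ref{Able2} already delivers convergence of $f=\sum_{n\geq 0}\lambda_n\mathrm{m}_n$ in $\mathcal{S}'$ and in $\mathcal{H}^{(p,q)}$.

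Convergence in $\mathcal{H}^{(p,q,\alpha)}$ I would obtain by applying the norm estimate just proved to the tails $\sum_{n>N}\lambda_n\mathrm{m}_n$, which are themselves molecular sums of the same form. This last step is the only nonroutine point and is where I expect the main obstacle: one must show $\left\|\sum_{n>N}(\frac{|\lambda_n|}{\left\|\chi_{Q_n}\right\|_\alpha})^\eta\chi_{Q_n}\right\|_{\frac{p}{\eta},\frac{q}{\eta},\frac{\alpha}{\eta}}\to0$ as $N\to\infty$, the difficulty being that the $\frac{\alpha}{\eta}$-norm is a supremum over dilations $\rho$ which does not obviously commute with the pointwise monotone limit of the partial sums. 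I would resolve this using the dominated convergence property of the Fofana spaces $(L^{p/\eta},\ell^{q/\eta})^{\alpha/\eta}(\mathbb R^d)$ from \cite{DmFe}, the tails being dominated by the single function $\sum_{n\geq 0}(\frac{|\lambda_n|}{\left\|\chi_{Q_n}\right\|_\alpha})^\eta\chi_{Q_n}$, whose $\frac{\alpha}{\eta}$-norm is finite by (\ref{reconstmolecule3}); completeness of the quasi-Banach space $\mathcal{H}^{(p,q,\alpha)}(\mathbb R^d)$ then secures the limit and its identification with $f$.
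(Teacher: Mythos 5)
Up to and including the norm inequality, your argument is the paper's own proof, step for step: the same covariance $\left(St^{\alpha}_{\rho}\left(\vert Q_n^{\rho}\vert^{\frac{1}{\alpha}-\frac{1}{p}}\mathrm{m}_n\right),Q_n^{\rho}\right)\in\mathcal{M}\ell(p,r,s)$, the same identity $\vert\tilde\lambda_n^{\rho}\vert/\Vert\chi_{Q_n^{\rho}}\Vert_{p}=\vert\lambda_n\vert/\Vert\chi_{Q_n^{\rho}}\Vert_{\alpha}$ turning the transformed control function into $St^{\alpha/\eta}_{\rho}$ of the original one, the same application of Theorem \ref{Able2} with a constant independent of $\rho$, and the same supremum over $\rho$; the convergence in $\mathcal S'$ and in $\mathcal H^{(p,q)}$ is likewise what Theorem \ref{Able2} gives at each fixed $\rho$. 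That part of your proposal is correct.

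The genuine gap is your final step, and the repair you propose is not available. The quasi-norm of $(L^{p/\eta},\ell^{q/\eta})^{\alpha/\eta}(\mathbb R^d)$ is dilation invariant, $\Vert St^{\alpha/\eta}_{\sigma}g\Vert_{\frac{p}{\eta},\frac{q}{\eta},\frac{\alpha}{\eta}}=\Vert g\Vert_{\frac{p}{\eta},\frac{q}{\eta},\frac{\alpha}{\eta}}$ for every $\sigma>0$, because the supremum over dilations absorbs $\sigma$; this already rules out any dominated convergence (absolute continuity) property for these Morrey-type spaces, whatever is in \cite{DmFe}. Concretely, take $Q_n=Q(0,2^{-n})$ and $\lambda_n=1$: the control function $g=\sum_{n\geq 0}\vert Q_n\vert^{-\eta/\alpha}\chi_{Q_n}\approx \vert x\vert^{-d\eta/\alpha}\chi_{\{\vert x\vert\leq 1\}}$ lies in $(L^{p/\eta},\ell^{q/\eta})^{\alpha/\eta}$ when $p<\alpha<q$, yet the tail $\sum_{n\geq N}\vert Q_n\vert^{-\eta/\alpha}\chi_{Q_n}$ equals $St^{\alpha/\eta}_{2^{-N}}g$ and so has the same quasi-norm for every $N$: the tails do not tend to $0$. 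Worse, taking $\mathrm{m}_n:=St^{\alpha}_{2^{-n}}a_0$ for a fixed nonzero atom $a_0$ on $Q(0,1)$ (atoms are molecules), hypothesis (\ref{reconstmolecule3}) holds while the tails $\sum_{n>N}\mathrm{m}_n=St^{\alpha}_{2^{-N}}\left(\sum_{m>0}St^{\alpha}_{2^{-m}}a_0\right)$ all have the same nonzero $\mathcal H^{(p,q,\alpha)}$ quasi-norm, by the same scale invariance of $\Vert\cdot\Vert_{\mathcal H^{(p,q,\alpha)}}$; the partial sums are therefore not Cauchy, so convergence in $\mathcal H^{(p,q,\alpha)}$ (as opposed to in $\mathcal S'$ and in $\mathcal H^{(p,q)}$) cannot be established along these lines and in fact fails in this generality. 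You should know that the paper is no better off on this point: its proof ends with the uniform-in-$\rho$ estimate and the words ``the result follows,'' offering no argument at all for convergence in $\mathcal H^{(p,q,\alpha)}$. Your instinct that this is the one nonroutine point is exactly right; your proposed cure, however, rests on a property these spaces do not have.
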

\proof
From Relation (\ref{reconstmolecule3}), we deduce that the series $\sum_{n\geq 0}\bigg(   \frac{|\lambda_{n}|}{\left\|\chi_{Q_{n}}\right\|_{\alpha}}\bigg)^{\eta}\chi_{Q_{n}}$ converges in   $\left(L^{\frac{p}{\eta}},\ell^{\frac{q}{\eta}}\right)(\mathbb R^d)$. Fix $\rho>0$. We have 
\begin{eqnarray*}St^{\frac{\alpha}{\eta}}_\rho\left(\sum_{n\geq 0}\bigg(   \frac{|\lambda_{n}|}{\left\|\chi_{Q_{n}}\right\|_{\alpha}}\bigg)^{\eta}\chi_{Q_{n}}\right)&=&\sum_{n\geq 0}\bigg(   \frac{|\lambda_{n}|}{\left\|\chi_{Q_{n}}\right\|_{\alpha}}\bigg)^{\eta}St^{\frac{\alpha}{\eta}}_\rho\chi_{Q_{n}}\\
&=&\sum_{n\geq 0}\bigg(  \rho^{-\frac{d\eta}{\alpha}} \frac{|\lambda_{n}|}{\left\|\chi_{Q_{n}}\right\|_{\alpha}}\bigg)^{\eta}\chi_{Q^\rho_{n}}.
\end{eqnarray*}
It comes that for $\rho>0$, we have
\begin{eqnarray*}
\left\|St^{\frac{\alpha}{\eta}}_\rho\left[\sum_{n\geq 0}\bigg(   \frac{|\lambda_{n}|}{\left\|\chi_{Q_{n}}\right\|_{\alpha}}\bigg)^{\eta}\chi_{Q_{n}}\right]\right\|_{\frac{p}{\eta},\frac{q}{\eta}}&=&\left\|\sum_{n\geq 0}\bigg(  \rho^{-\frac{d\eta}{\alpha}} \frac{|\lambda_{n}|}{\left\|\chi_{Q_{n}}\right\|_{\alpha}}\bigg)^{\eta}\chi_{Q^\rho_{n}}\right\|_{\frac{p}{\eta},\frac{q}{\eta}}\\
&=&\left\|\sum_{n\geq 0}\bigg(  \frac{\vert Q^\rho_n\vert^{\frac{1}{p}-\frac{1}{\alpha}}|\lambda_{n}|}{\left\|\chi_{Q^\rho_{n}}\right\|_{p}}\bigg)^{\eta}\chi_{Q^\rho_{n}}\right\|_{\frac{p}{\eta},\frac{q}{\eta}}<\infty.
\end{eqnarray*}
Since $\left(St^\alpha_\rho(\vert Q^\rho_n\vert^{\frac{1}{\alpha}-\frac{1}{p}}\mathrm{m}_n),Q^\rho_n\right)\in\mathcal M\ell(p,r,s)$, the series 
$$\sum_{n\geq 0}\lambda_n\vert Q^\rho_n\vert^{\frac{1}{p}-\frac{1}{\alpha}}St^\alpha_\rho(\vert Q^\rho_n\vert^{\frac{1}{\alpha}-\frac{1}{p}}\mathrm{m}_n)=St^\alpha_\rho(\sum_{n\geq 0}\lambda_{n}\mathrm{m}_{n})$$
converges in $\mathcal S'$ and in $\mathcal H^{(p,q)}$, thanks to Proposition \ref{reconstmolecule2}. Furthermore, 
\begin{eqnarray*}\left\|St^\alpha_\rho(\sum_{n\geq 0}\lambda_{n}\mathrm{m}_{n})\right\|_{p,q}&\lesssim_{d,q,p,s}&\left\|St^{\frac{\alpha}{\eta}}_\rho\left[\sum_{n\geq 0}\bigg(   \frac{|\lambda_{n}|}{\left\|\chi_{Q_{n}}\right\|_{\alpha}}\bigg)^{\eta}\chi_{Q_{n}}\right]\right\|_{\frac{p}{\eta},\frac{q}{\eta}}\\
&\lesssim_{d,q,p,s}&\left\|\left[\sum_{n\geq 0}\bigg(   \frac{|\lambda_{n}|}{\left\|\chi_{Q_{n}}\right\|_{\alpha}}\bigg)^{\eta}\chi_{Q_{n}}\right]\right\|_{\frac{p}{\eta},\frac{q}{\eta},\frac{\alpha}{\eta}}
\end{eqnarray*}
The result follows.
\epf

\begin{proof}[Proof of Theorem \ref{fincalderon}]
Let $f\in\mathcal H^{(p,q,\alpha)}(\mathbb R^d)$. Since  $\mathcal H^{(p,q,\alpha)}(\mathbb R^d)$ is a subset of $\mathcal H^{(p,q)}(\mathbb R^d)$, we have that $Tf\in \mathcal H^{(p,q)}(\mathbb R^d)$ and $\left\| Tf\right\|_{\mathcal H^{(p,q)}}\lesssim \left\| f\right\|_{\mathcal H^{(p,q)}}$, thanks to Theorem \ref{able1}.

 Let $s\geq \lfloor d(\frac{1}{p}-1) \rfloor$ be an integer;    $s_{0}=d+2s+2$ \; and  \; $f\in \mathcal{H}_{\mathrm{fin},\infty,s_{0}}^{(p,q,\alpha)}\cap \mathcal{C}(\mathbb{R}^{d}).$ We consider a finite sequence $\{(\mathfrak a_{n},Q_{n})\}_{n=0}^{j}$ of elements of $\mathcal{A}(p,\alpha,\infty,s_0)$ and a finite sequence of scalars $\{\lambda_{n}\}_{n=0}^{j}$ such that $f=\sum_{n=0}^{j}\lambda_{n}\mathfrak a_{n}.$ Since for all $n$ we have $(\vert Q_n\vert^{\frac{1}{\alpha}-\frac{1}{p}}\mathfrak a_n,Q_n)\in\mathcal A(p,\infty,s_0)$, we have that $(\frac{1}{C_1}T(\vert Q_n\vert^{\frac{1}{\alpha}-\frac{1}{p}}\mathfrak a_n),Q_n)\in\mathcal M\ell(p,r,s)$ whenever  $\max\{1,p\}<r<+\infty;$ thanks to Lemma \ref{atom_mol}. That is  $(\frac{1}{C_1}T(\mathfrak a_n),Q_n)\in\mathcal M\ell(p,\alpha,r,s)$.
 Let's fix $0<\eta<p.$ 
We have  
\begin{equation*}
T(f)=\sum_{n=0}^{j}\lambda_{n}T(a_{n})=\sum_{n=0}^{j}(C_{1}\lambda_{n})\left(\frac{1}{C_{1}}T(a_{n})\right) \in \mathcal{H}^{(p,q,\alpha)}
\end{equation*}
 and
\begin{eqnarray*}
\left\|T(f)\right\|_{\mathcal{H}^{(p,q,\alpha)}}\lesssim \left\|\sum_{n=0}^{j}\left(\dfrac{|c_{1}\lambda_{n}|}{\left\|\chi_{Q_{n}}\right\|_{\alpha}}\right)^{\eta}\chi_{Q_{n}} \right\|_{\frac{q}{\eta},\frac{p}{\eta},\frac{\alpha}{\eta}}^{\frac{1}{\eta}}\lesssim\left\|\sum_{n=0}^{j}\left(\dfrac{|\lambda_{n}|}{\left\|\chi_{Q_{n}}\right\|_{\alpha}}\right)^{\eta}\chi_{Q_{n}} \right\|_{\frac{q}{\eta},\frac{p}{\eta},\frac{\alpha}{\eta}}^{\frac{1}{\eta}}
\end{eqnarray*}
according to  Theorem \ref{reconstmolecule2}. This allow us to say that 
\begin{eqnarray*}
\left\|T(f)\right\|_{\mathcal{H}^{(p,q,\alpha)}}\lesssim \left\|f\right\|_{\mathcal{H}_{\mathrm{fin},\infty,s_{0}}^{(p,q,\alpha)}}\lesssim \left\|f\right\|_{\mathcal{H}^{(p,q,\alpha)}}.
\end{eqnarray*}
 Therefore, $T$ is a bounded operator from $\mathcal{H}_{\mathrm{fin},\infty,s_{0}}^{(p,q,\alpha)}\cap \mathcal{C}(\mathbb{R}^{d})$ to $\mathcal{H}^{(p,q,\alpha)}$ and the density of  $\mathcal{H}_{\mathrm{fin},\infty,s_{0}
}^{(p,q,\alpha)}\cap \mathcal{C}(\mathbb{R}^{d})$ in $\mathcal{H}^{(q,p,\alpha)}$ yields the result.

\end{proof}

This result generalized the analogue result in classical Hardy space, and proved that the space $\mathcal H^{(p,q,\alpha)}$ is a stable subspace under $T$, of the
 Hardy-amalgam space $\mathcal H^{(p,q)}$ defined in \cite{AbFe}.  

\section{Norm inequalities for commutators}
Let $T$ be a Calder\'on-Zygmund  operator  and  $\mathfrak{b}$  a locally integrable  function on $\mathbb{R}^{d}.$ The commutator of $T$ and 
  $\mathfrak{b}$ is the operator $[\mathfrak{b},T] $ formally defined by
  \begin{equation}\label{BMO1}
  [\mathfrak{b},T](f)(x)= \mathfrak{b}(x)T(f)(x)-T(\mathfrak{b}f)(x)
  \end{equation}
  for all $x\in\mathbb{R}^{d}.$
  
 Coifman et al \cite{CRW} proved that the commutator $\left[T,\mathfrak b\right]$ is bounded on Lebesgue space for $p\in\left(1,\infty\right)$ when $\mathfrak b\in BMO(\mathbb R^d)$.
  We recall that a locally integrable function $\mathfrak b$ belongs to $BMO(\mathbb R^d)$ if 
  $$\sup_{Q:\text{ cube}}\vert Q\vert^{-1}\int_Q\vert \mathfrak b(x)-\mathfrak b_Q\vert dx<\infty$$
  where $\mathfrak b_Q=\vert Q\vert ^{-1}\int_Q\mathfrak b(x)dx$.

But when $\mathfrak b\in BMO(\mathbb R^d)$, it is well known that the commutator $\left[T,\mathfrak b\right]$ is not bounded from $\mathcal H^1(\mathbb R^d)$ to $L^1(\mathbb R^d)$ if $\mathfrak b$ is not a constant function (see \cite{HST}). Many authors have constructed subspaces of $BMO(\mathbb R^d)$ which ensure the boundedness of $\left[T,\mathfrak b\right]$ from the weighted Hardy spaces to the weighted Lebesgue space (see for example \cite{HK,LKY}). In this article we give a subspace of $BMO(\mathbb R^d)$ which ensures boundedness of commutators from generalized Hardy-Morrey spaces into Fofana spaces.

%
%
%
 We introduced in \cite{DmFe} a subspace of $BMO(\mathbb R^d)$ name $BMO^{d}:=BMO^{d}(\mathbb{R}^{d})$ which consists of  $\mathfrak b\in BMO$ for which there exists $0<\mu_{b}<d$ such that  
   \begin{eqnarray}\label{Jesus}
   \left|\mathfrak{b}(x)-\mathfrak{b}_{Q}\right|\leq C\bigg(\ell_{Q}^{-1}|x-x_{Q}| \bigg)^{\mu_{b}}
   \end{eqnarray}
 for all cubes $Q:=Q(x_{Q},\ell_{Q})$ and all $x\notin Q$. 
 
 Notice that $2\ell^{-1}_Q\vert x-x_Q\vert\geq 1$ for $x\notin Q$. It follows that $ L^\infty(\mathbb R^d)\subset BMO^d$
 
  We also proved in \cite{DmFe} that commutators of intrinsic area function with elements of $BMO^d$ are bounded in $\mathcal H^{(p,q,\alpha)}(\mathbb R^d)$.
 
 For Calder\'on-Zygmund operators, we have the following result.
 \begin{thm}\label{Agosty}
Let $\delta>0$, $T$ a $\delta$-Calder\'on-Zygmund operator and $\mathfrak{b}\in BMO^{d}$. We assume that the kernel of $T$ satisfies the following additional condition : $K(x,z)\leq \frac{1}{2}K(x,y)$ whenever $ 2|x - z| \geq |x - y|$. If $ \frac{d}{d+\delta-\mu_{b}}<p\leq 1$, then $[\mathfrak{b},T]$ extends to a bounded operator from $\mathcal{H}^{(p,q,\alpha) }$ into $(L^{p},\ell^{q})^{\alpha}$.
   
\end{thm}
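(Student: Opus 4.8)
The plan is to mirror the architecture of the proof of Theorem~\ref{4}, the only change being that the kernel-smoothness exponent $\frac{d+\delta}{d}$ is replaced by $\frac{d+\delta-\mu_{b}}{d}$. First I would reduce to finite atomic sums: fix $r>\max\{2,q\}$ and an integer $s\geq\lfloor d(\frac1p-1)\rfloor$, take $f\in\mathcal H^{(p,q,\alpha)}_{\mathrm{fin},r,s}$ with $f=\sum_{n=0}^{j}\lambda_{n}a_{n}$ and $(a_{n},Q_{n})\in\mathcal A(p,\alpha,r,s)$, and bound $|[\mathfrak{b},T](f)|\leq\sum_{n}|\lambda_{n}|\,|[\mathfrak{b},T](a_{n})|$. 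The whole argument then reduces to a single pointwise estimate for the image of one atom; once this is available, the passage to the $\|\cdot\|_{p,q,\alpha}$-norm (splitting into the part on $\widetilde{Q}_{n}$ and the part off $\widetilde{Q}_{n}$, applying the dilation $St^{\alpha}_{\rho}$ and invoking Lemma~\ref{tech}) is carried out exactly as in Theorem~\ref{4}, and density of $\mathcal H^{(p,q,\alpha)}_{\mathrm{fin},r,s}$ in $\mathcal H^{(p,q,\alpha)}$ closes the proof.

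The estimate to establish is that, for a $(p,\alpha,r,s)$-atom $a$ supported in $Q$ and $x\notin\widetilde{Q}$,
$$|[\mathfrak{b},T](a)(x)|\lesssim\frac{[\mathfrak{M}(\chi_{Q})(x)]^{\frac{d+\delta-\mu_{b}}{d}}}{\|\chi_{Q}\|_{\alpha}}.$$
Since $x\notin Q\supset\mathrm{supp}\,a$, I would write $[\mathfrak{b},T](a)(x)=\int_{Q}[\mathfrak{b}(x)-\mathfrak{b}(y)]K(x,y)a(y)\,dy$ and split $\mathfrak{b}(x)-\mathfrak{b}(y)=[\mathfrak{b}(x)-\mathfrak{b}_{Q}]-[\mathfrak{b}(y)-\mathfrak{b}_{Q}]$. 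The first part yields $[\mathfrak{b}(x)-\mathfrak{b}_{Q}]T(a)(x)$; here the $BMO^{d}$ growth bound \eqref{Jesus} gives $|\mathfrak{b}(x)-\mathfrak{b}_{Q}|\lesssim(\ell_{Q}^{-1}|x-x_{Q}|)^{\mu_{b}}$, Lemma~\ref{tech}(1) rescaled to $(p,\alpha,r,s)$-atoms gives $|T(a)(x)|\lesssim[\mathfrak{M}(\chi_{Q})(x)]^{\frac{d+\delta}{d}}/\|\chi_{Q}\|_{\alpha}$, and the comparison $\mathfrak{M}(\chi_{Q})(x)\approx(\ell_{Q}/|x-x_{Q}|)^{d}$ off $\widetilde{Q}$ converts the growth factor into the loss $-\mu_{b}/d$ in the exponent. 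This is the place where one uses $\delta>\mu_{b}$, implicit in $\frac{d}{d+\delta-\mu_{b}}<p\leq 1$, to keep $\frac{d+\delta-\mu_{b}}{d}>1$.

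The delicate part is the second contribution $-\int_{Q}[\mathfrak{b}(y)-\mathfrak{b}_{Q}]K(x,y)a(y)\,dy$. Using $\int_{Q}a=0$ I would subtract $K(x,x_{Q})$ and split it as
$$-\int_{Q}[\mathfrak{b}(y)-\mathfrak{b}_{Q}][K(x,y)-K(x,x_{Q})]a(y)\,dy-K(x,x_{Q})\int_{Q}[\mathfrak{b}(y)-\mathfrak{b}_{Q}]a(y)\,dy.$$
The first summand is harmless: the H\"older-$\delta$ regularity \eqref{2} gives $|K(x,y)-K(x,x_{Q})|\lesssim\ell_{Q}^{\delta}|x-x_{Q}|^{-d-\delta}$, and John--Nirenberg gives $\int_{Q}|\mathfrak{b}-\mathfrak{b}_{Q}|\,|a|\lesssim|Q|^{1-\frac1\alpha}\|\mathfrak{b}\|_{BMO}$, producing decay $\frac{d+\delta}{d}$, more than enough. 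The genuine obstacle is the last term $M_{Q}K(x,x_{Q})$ with $M_{Q}=\int_{Q}[\mathfrak{b}(y)-\mathfrak{b}_{Q}]a(y)\,dy$: the product $[\mathfrak{b}-\mathfrak{b}_{Q}]a$ carries no vanishing moment, so $M_{Q}$ survives, and $K(x,x_{Q})\sim|x-x_{Q}|^{-d}$ decays only like $\mathfrak{M}(\chi_{Q})(x)$, an exponent $1$ that is \emph{not} $p$-integrable for $p\leq 1$. This is exactly the point at which the extra kernel hypothesis $K(x,z)\leq\frac12K(x,y)$ for $2|x-z|\geq|x-y|$ must be brought in, to upgrade the decay of $M_{Q}K(x,x_{Q})$ to at least $\frac{d+\delta-\mu_{b}}{d}$; I expect controlling this term to be the main difficulty.

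Finally I would treat the local piece $[\mathfrak{b},T](a)\chi_{\widetilde{Q}}$ by the $L^{r}$-boundedness of the commutator for $\mathfrak{b}\in BMO$ (Coifman--Rochberg--Weiss), which gives $\|[\mathfrak{b},T](a)\chi_{\widetilde{Q}}\|_{r}\lesssim\|\mathfrak{b}\|_{BMO}\|a\|_{r}\leq|Q|^{\frac1r-\frac1\alpha}$, so that the support-plus-$L^{r}$-size condition lets Lemma~\ref{tech}(2) absorb it exactly as in \eqref{eq1}. Substituting the two pointwise bounds into the $St^{\alpha}_{\rho}$-dilation computation of Theorem~\ref{4}, now run at the exponent $\frac{d+\delta-\mu_{b}}{d}$ with Lemma~\ref{tech}(3) applied under $p>\frac{d}{d+\delta-\mu_{b}}$ (which is precisely what makes the Fefferman--Stein inequality applicable), gives $\|[\mathfrak{b},T](f)\|_{p,q,\alpha}\lesssim\|f\|_{\mathcal H^{(p,q,\alpha)}_{\mathrm{fin},r,s}}\lesssim\|f\|_{\mathcal H^{(p,q,\alpha)}}$, and density yields the theorem.
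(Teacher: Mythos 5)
Your overall architecture is the same as the paper's: reduction to finite sums of $(p,\alpha,r,s)$-atoms, the split into the part on $\widetilde{Q}_{n}$ (handled by $L^{r}$-boundedness of $[\mathfrak{b},T]$ for $\mathfrak{b}\in BMO$ plus Lemma \ref{tech}(2)) and the part off $\widetilde{Q}_{n}$ (handled by pointwise maximal-function bounds fed into Estimate (\ref{eq2})), then density. Those outer layers of your proposal are correct. But the proposal has a genuine gap exactly at the step you yourself flag as ``the main difficulty'': the uncancelled term $K(x,x_{Q})M_{Q}$, $M_{Q}=\int_{Q}[\mathfrak{b}(y)-\mathfrak{b}_{Q}]a(y)\,dy$, produced by your decomposition. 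You assert that the extra kernel hypothesis ``must be brought in'' there, but you never show how, and nothing in your write-up controls it: $M_{Q}\neq 0$ in general, and (\ref{1}) alone gives $|K(x,x_{Q})|\lesssim|x-x_{Q}|^{-d}$, i.e.\ decay exponent $1$, which is not summable in the $\|\cdot\|_{p,q,\alpha}$-norm for $p\leq 1$ --- this term is precisely the reason commutators fail to map $\mathcal{H}^{1}$ into $L^{1}$. A proof that stops at identifying this obstacle has not proved the theorem.

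The missing idea, which is how the paper proceeds, is to deploy the hypothesis \emph{before} any splitting, so that the problematic term never appears. For $x\notin\widetilde{Q}_{n}$ and $y\in Q_{n}$ one has $|x-y|\leq 2|x-x_{n}|$, so the hypothesis with $z=x_{n}$ gives $|K(x,x_{n})|\leq\frac{1}{2}|K(x,y)|$, and the triangle inequality upgrades this to the pointwise domination $|K(x,y)|\leq 2\,|K(x,y)-K(x,x_{n})|$ for every $y\in Q_{n}$. Inserting this inside $|[\mathfrak{b},T](a_{n})(x)|\leq\int_{Q_{n}}|K(x,y)|\,|\mathfrak{b}(x)-\mathfrak{b}(y)|\,|a_{n}(y)|\,dy$, then applying the H\"older regularity (\ref{2}) to get the factor $\ell_{n}^{\delta}|x-x_{n}|^{-d-\delta}$, and only then splitting $|\mathfrak{b}(x)-\mathfrak{b}(y)|\leq|\mathfrak{b}(x)-\mathfrak{b}_{Q_{n}}|+|\mathfrak{b}(y)-\mathfrak{b}_{Q_{n}}|$ via (\ref{Jesus}) and John--Nirenberg, yields the two exponents $\nu=\frac{d+\delta-\mu_{b}}{d}$ and $v=\frac{d+\delta}{d}$ directly; note that the atom's moment conditions are never used off the support, so no remainder term survives. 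If you insist on your decomposition, the same mechanism salvages it: combining the hypothesis with (\ref{2}) (for a point $y\in Q$ with $|y-x_{Q}|\approx\ell_{Q}$ and $|x-y|\geq 2|y-x_{Q}|$) gives $|K(x,x_{Q})|\leq|K(x,y)-K(x,x_{Q})|\lesssim\ell_{Q}^{\delta}|x-x_{Q}|^{-d-\delta}$, whence $|K(x,x_{Q})M_{Q}|\lesssim\|\mathfrak{b}\|_{BMO}\,[\mathfrak{M}(\chi_{Q})(x)]^{\frac{d+\delta}{d}}/\|\chi_{Q}\|_{\alpha}$, which is absorbed like your other terms. Either way, this step must actually be carried out; as written, your argument is incomplete at its crux.
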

\begin{proof}
Let $r>\max\{2,q\}$ and an integer $s\geq
\lfloor d(\frac{1}{p}-1)\rfloor$. We fix  $f=\sum_{n=0}^{j}\lambda_{n }a_{n}\in \mathcal{H}_{\mathrm{fin},r,s}^{(p,q,\alpha)}$ with $\mathrm{supp } \ a_n\subset Q_n$ so that  $\{(a_{n},Q_{n})\}_{n=0}^{j}\subset\mathcal{A}(p,\alpha,r, s)$, and  $\{\lambda_{n}\}_{n=0}^{j}$ a finite family of scalars. Put $\widetilde{Q}_{n} :=2\sqrt{d}Q_{n}$ for $n\in \{0,\cdots,j\}$ and let  $x_{n}$ and $\ell_{n}$ be respectively the center and the side length of $Q_{n}.$ We have:
\begin{eqnarray}\label{opayu}
|[\mathfrak{b},T](f)(x)|\leq \sum_{n=0}^{j}|\lambda_{n}||[\mathfrak{b},T](a_{ n})\chi_{\widetilde{Q}_{n}}(x)|+ \sum_{n=0}^{j}|\lambda_{n}||[\mathfrak{b},T]( a_{n})\chi_{\mathbb{R}^{d}\backslash\widetilde{Q}_{n}}(x)|
\end{eqnarray}
for all $x\in\mathbb{R}^{d}.$
Fix $0<\eta<p$. Since $\mathfrak{b}\in BMO(\mathbb R^d)$ and the operator $[\mathfrak{b},T]$ is bounded on $L^{\nu}$ for all $\nu>1,$ we have 

$$\left\|\left(([\mathfrak{b},T](a_{n})\chi_{\widetilde{Q}_{n}}\right)^{\eta}\right\|_{\frac{r}{\eta}}\leq C_{b,r,\eta}\left\|a_{n}\right\|_{r}^{\eta}\leq C_{b,r,\eta,\alpha, d}|\widetilde{Q}_{n}|^{\frac{1}{\frac{r}{\eta}}-\frac{1}{\frac{\alpha}{\eta}}}.$$
It follows that 
\begin{eqnarray*}
\left\|\sum_{n=0}^{j}|\lambda_{n}||[\mathfrak{b},T](a_{n})\chi_{\widetilde{Q}_{n} }|\right\|_{p,q,\alpha}&\leq&\left\|\sum_{n=0}^{j}|\lambda_{n}|^{\eta}\bigg(|[ \mathfrak{b},T](a_{n})\chi_{\widetilde{Q}_{n}}|\bigg)^{\eta}\right\|_{\frac{p}{\eta },\frac{q}{\eta},\frac{\alpha}{\eta}}^{\frac{1}{\eta}}\\&\leq& C_{b,r,\eta,\alpha,d}\left\|\sum_{n=0}^{j}\bigg( \frac{|\lambda_{n}|}{\left\| \chi_{Q_{n}}\right\| _{\alpha}}\bigg)^{\eta}\chi_{Q_{n}}\right\|_{\frac{p}{\eta},\frac{q}{\eta},\frac {\alpha}{\eta}}^{\frac{1}{\eta}}
\end{eqnarray*}
thanks to assertion 2) of Lemma \ref{tech}. For the second term, we fix  $x\notin \widetilde{Q}_{n}.$ 
We have
\begin{eqnarray*}
|[\mathfrak{b},T](a_{n})(x)|&=&|T((\mathfrak{b}(x)-\mathfrak{b})a_{n})(x) |\\&\leq& 2 \int_{Q_{n}} \left|K(x,y)-K(x,x_{n})\right||\mathfrak{b}(x)-\mathfrak{ b}(y)||a_{n}(y)|dy \\&\lesssim& \frac{\ell_{n}^{\delta}}{|x-x_{ n}|^{d+\delta}}\int_{Q_{n}}|\mathfrak{b}(x)-\mathfrak{b}(y)||a_{n}(y)|dy
\end{eqnarray*}
the last inequality is due to the relation (\ref{2}) since $|x-x_{n}|>2|y-x_{n}|$ and to the fact that $|y-x_{n}|\leq \frac{\sqrt{d}}{2}\ell_{n},$ for all $y\in Q_{n}$ while the second inequality comes from the additional hypothesis on the kernel.

  
But then, 
\begin{eqnarray*}
\int_{Q_{n}}|\mathfrak{b}(x)-\mathfrak{b}(y)||a_{n}(y)|dy&\leq& \int_{Q_{n}}(|\mathfrak{b}(x)-\mathfrak{b}_{Q_{n}}|+ |\mathfrak{b}(y)-\mathfrak{b}_{Q_{n}}| )|a_{n }(y)|dy \\&\leq& \int_{Q_{n}}|\mathfrak{b}(x)-\mathfrak{b}_{Q_{n}}||a_{n}(y) |dy+ \int_{Q_{n}}|\mathfrak{b}(y)-\mathfrak{b}_{Q_{n}}| |a_{n}(y)|dy \\&\leq& C \frac{\ell_{n}^{-\mu_{b}}}{|x-x_{n}|^{-\mu_{b }}}|Q_{n}|^{-\frac{1}{\alpha}+1} +\left\|\mathfrak{b}\right\|_{BMO}|Q_{n}|^{ -\frac{1}{\alpha}+1}
\end{eqnarray*}
by condition (\ref{Jesus}) and Hölder's inequality. It comes that

\begin{eqnarray*}
\left|\left[\mathfrak{b},T\right](a_{n})(x)\right|\lesssim \bigg( \frac{\bigg(\mathfrak{M }(\chi_{Q_{n}})(x)\bigg)^{\nu}}{\left\|\chi_{Q_{n}}\right\|_{\alpha}} +\left\|\mathfrak{b}\right\|_{BMO} \frac{\bigg(\mathfrak{M}(\chi_{Q_{n}})(x)\bigg)^{v}}{\left\|\chi_{Q_{n}}\right\|_{\alpha}}\bigg)
\end{eqnarray*}
with $\nu=\frac{d+\delta-\mu_{b}}{d}$ \;and\; $v=\frac{d+\delta}{d}.$
Finally, we have 
\begin{eqnarray*}
\left\|\sum_{n=0}^{j}|\lambda_{n}||[\mathfrak{b},T](a_{n})\chi_{\mathbb{R}^{d} \backslash\widetilde{Q}_{n}}|\right\|_{p,q,\alpha}\leq C(b,T)( E+F)
\end{eqnarray*}
with
\begin{eqnarray*}
E=\left\|\sum_{n=0}^{j}|\lambda_{n}|\frac{\bigg(\mathfrak{M}(\chi_{Q_{n}})(x)\bigg )^{\nu}}{\left\|\chi_{Q_{n}}\right\|_{\alpha}} \right\|_{p,q,\alpha} \text{ and } \; \; F=\left\|\sum_{n=0}^{j}|\lambda_{n}|\frac{\bigg(\mathfrak{M}(\chi_{Q_{n}})(x)\bigg )^{v}}{\left\|\chi_{Q_{n}}\right\|_{\alpha}} \right\|_{p,q,\alpha}.
\end{eqnarray*}
  Let $0<\eta<p.$ It comes from Estimate (\ref{eq2}) that 
\begin{eqnarray*}
\left\|\sum_{n=0}^{j}|\lambda_{n}||[\mathfrak{b},T](a_{n})\chi_{\mathbb{R}^{d} \backslash\widetilde{Q}_{n}}|\right\|_{p,q,\alpha}\leq C(b,T) \left\|\sum_ {n=0}^{j}\bigg(\frac{|\lambda_{n}|}{\left\|\chi_{Q_{n}}\right\|_{\alpha}}\bigg)^ {\eta}\chi_{Q_{n}} \right\|_{\frac{p}{\eta},\frac{q}{\eta},\frac{\alpha}{\eta}}^ {\frac{1}{\eta}}.
\end{eqnarray*}
Therefore, we have
\begin{eqnarray*}
\left\|[\mathfrak{b},T](f)\right\|_{p,q,\alpha}\leq C(b,T) \left\|\sum_{n=0}^{j}\bigg(\frac{|\lambda_{n}|}{\left\|\chi_{Q_{n}}\right\|_{\alpha}}\bigg)^{\eta}\chi_{Q_{n}} \right\|_{\frac{p}{\eta},\frac{q}{\eta},\frac{\alpha}{\eta}}^{\frac{1}{\eta}}
\end{eqnarray*}
according to the relation (\ref{opayu}). Subsequently,
\begin{equation*}
\left\|[\mathfrak{b},T](f)\right\|_{p,q,\alpha} \leq C(b,T) \left\|f\right\|_{\mathcal{H}_{\mathrm{fin},r}^{(q,p,\alpha)}}\leq C(b,T) \left\|f\right\|_{\mathcal{H}^{(p,q,\alpha)}}
\end{equation*}
The result follows from the density of $\mathcal{H}_{\mathrm{fin},r,s}^{(p,q,\alpha)}(\mathbb R^d)$ in $\mathcal{H}^{(p,q,\alpha)}(\mathbb R^d)$.
\end{proof}

With some regularity conditions on the kernel $K$ of $T$, we can remove the additional condition imposed in the above theorem. More precisely, we have the following result.
 \begin{thm}
 Let $\delta>0$ and $T$ a $\delta$-Calder\'on-Zygmund operator associated with a kernel $K$. We assume that there is a positive integer $m\geq d$ and two constants $B_{m}>0$ and $C_{m}>0$ such that:
\begin{equation}\label{Agosty1}
|\partial_{y}^{\beta}K(x,y)|\leq \frac{C_{m}}{|x-y|^{d+|\beta|}}
\end{equation}
and
\begin{equation}\label{agostie1}
\sum_{|\beta|\leq m-1}\frac{1}{\beta!}\frac{|y-z|^{|\beta|}}{|x-z|^{d+|\beta|}} \leq B_{m}|K(x,y)-K(x,z)|
\end{equation}
for all multi-index $\beta$ with $|\beta|\leq m$ and  $(x,y),(x,z)\in \mathbb{R}^{d}\times\mathbb{R }^{d}\backslash\bigtriangleup.$
If $\mathfrak{b}\in BMO^{d}$ and $\max\{\frac{d}{d+\delta-\mu_{b}},\frac{ d }{d+m-\mu_{b}}\}<p\leq 1$, then $[\mathfrak{b},T]$ extends to a bounded operator from $ \mathcal{H}^{(p,q,\alpha) }$ into $(L^{p},\ell^{q})^{\alpha}$. 
\end{thm}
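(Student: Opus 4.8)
The plan is to follow the architecture of the proof of Theorem \ref{Agosty}, the sole role of the discarded pointwise hypothesis $K(x,z)\le\frac12K(x,y)$ there having been to replace the bare kernel by a kernel difference. First I would fix $r>\max\{2,q\}$, an integer $s=m-1$ (admissible since the hypothesis $p>\frac{d}{d+m-\mu_{b}}$ forces $\lfloor d(\frac1p-1)\rfloor\le m-1$), and reduce by density of $\mathcal H^{(p,q,\alpha)}_{\mathrm{fin},r,s}(\R^{d})$ in $\mathcal H^{(p,q,\alpha)}(\R^{d})$ to a finite sum $f=\sum_{n=0}^{j}\lambda_{n}a_{n}$ with $\{(a_{n},Q_{n})\}\subset\mathcal A(p,\alpha,r,s)$. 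For each $n$ I would split $[\mathfrak b,T](a_{n})=[\mathfrak b,T](a_{n})\chi_{\widetilde{Q}_{n}}+[\mathfrak b,T](a_{n})\chi_{\R^{d}\setminus\widetilde{Q}_{n}}$, exactly as in \eqref{opayu}. The local part is handled verbatim as in Theorem \ref{Agosty}: since $\mathfrak b\in BMO$, $[\mathfrak b,T]$ is bounded on every $L^{\nu}$, $\nu>1$, so after raising to a power $\eta\in(0,p)$ the functions $(|[\mathfrak b,T](a_{n})|\chi_{\widetilde{Q}_{n}})^{\eta}$ meet the support and $L^{r/\eta}$-size requirements of Lemma \ref{tech}(2), which yields control by $\big\|\sum_{n}(|\lambda_{n}|/\|\chi_{Q_{n}}\|_{\alpha})^{\eta}\chi_{Q_{n}}\big\|^{1/\eta}_{\frac p\eta,\frac q\eta,\frac\alpha\eta}$.

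The substance lies in the global part, where I would combine two regularity mechanisms. The observation that replaces the removed hypothesis is that the $\beta=0$ term of \eqref{agostie1} together with the size estimate \eqref{1} gives, for $y\in Q_{n}$ and $x\notin\widetilde{Q}_{n}$ (so that $|x-y|\ge\frac12|x-x_{n}|$), the comparison $|K(x,y)|\le A\,2^{d}B_{m}\,|K(x,y)-K(x,x_{n})|$; this is precisely the inequality the ad-hoc condition supplied in Theorem \ref{Agosty}. Using it and then the Hölder smoothness \eqref{2}, together with the splitting $|\mathfrak b(x)-\mathfrak b(y)|\le|\mathfrak b(x)-\mathfrak b_{Q_{n}}|+|\mathfrak b(y)-\mathfrak b_{Q_{n}}|$, the $BMO^{d}$ growth \eqref{Jesus} (namely $|\mathfrak b(x)-\mathfrak b_{Q_{n}}|\lesssim(\ell_{n}^{-1}|x-x_{n}|)^{\mu_{b}}\approx[\mathfrak M(\chi_{Q_{n}})(x)]^{-\mu_{b}/d}$) and a Hölder/John--Nirenberg bound for $\int_{Q_{n}}|\mathfrak b-\mathfrak b_{Q_{n}}||a_{n}|$, I would produce one term of the shape $[\mathfrak M(\chi_{Q_{n}})]^{\frac{d+\delta-\mu_{b}}{d}}/\|\chi_{Q_{n}}\|_{\alpha}$ and one harmless $BMO$ term of faster decay. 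To obtain the complementary exponent I would exploit the higher regularity \eqref{Agosty1} on the cancelling piece: writing $[\mathfrak b,T](a_{n})(x)=(\mathfrak b(x)-\mathfrak b_{Q_{n}})T(a_{n})(x)-T((\mathfrak b-\mathfrak b_{Q_{n}})a_{n})(x)$ and Taylor-expanding $K(x,\cdot)$ at $x_{n}$ to order $m-1$, the $s=m-1$ vanishing moments of $a_{n}$ give $|T(a_{n})(x)|\lesssim[\mathfrak M(\chi_{Q_{n}})(x)]^{\frac{d+m}{d}}/\|\chi_{Q_{n}}\|_{\alpha}$, whence, after multiplying by the same $BMO^{d}$ growth, a term of the shape $[\mathfrak M(\chi_{Q_{n}})]^{\frac{d+m-\mu_{b}}{d}}/\|\chi_{Q_{n}}\|_{\alpha}$.

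With these pointwise bounds in hand I would conclude as in \eqref{eq2}: for each exponent $\theta\in\{\frac{d+\delta-\mu_{b}}{d},\frac{d+m-\mu_{b}}{d}\}$, the power $[\mathfrak M(\chi_{Q_{n}})]^{\theta}$ is summable in the Fofana quasi-norm exactly when $p\theta>1$, i.e. when $p>\max\{\frac{d}{d+\delta-\mu_{b}},\frac{d}{d+m-\mu_{b}}\}$; applying Lemma \ref{tech}(3), using the dilation identities for $St^{\alpha}_{\rho}$ and taking the supremum over $\rho>0$, each term collapses to $\big\|\sum_{n}(|\lambda_{n}|/\|\chi_{Q_{n}}\|_{\alpha})^{\eta}\chi_{Q_{n}}\big\|^{1/\eta}_{\frac p\eta,\frac q\eta,\frac\alpha\eta}$, hence to $\|f\|_{\mathcal H^{(p,q,\alpha)}_{\mathrm{fin},r,s}}\lesssim\|f\|_{\mathcal H^{(p,q,\alpha)}}$, and density finishes the argument. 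The step I expect to be the main obstacle is the global estimate for the non-cancelling piece $T((\mathfrak b-\mathfrak b_{Q_{n}})a_{n})$: since the factor $\mathfrak b-\mathfrak b_{Q_{n}}$ destroys the moment conditions of $a_{n}$, no Taylor expansion is available there and one is forced to recover integrable decay solely through the kernel-difference comparison; checking that the $\delta$-smoothness \eqref{2} and the $m$-th order bounds \eqref{Agosty1} fit together to give precisely the stated $\max$-condition, and that the choices of $\eta$ and $s=m-1$ remain consistent with $\frac{d}{d+m-\mu_{b}}<p\le1$, is where the bookkeeping is most delicate.
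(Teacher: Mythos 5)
Your proof is correct, and although it keeps the paper's overall architecture (atomic decomposition, the local/global split as in \eqref{opayu}, pointwise bounds by powers of $\mathfrak{M}(\chi_{Q_n})$, Lemma \ref{tech} with Estimate \eqref{eq2}, then density), the crucial kernel estimate is obtained by a genuinely different --- and in fact sharper --- mechanism. The paper never splits off $\mathfrak{b}_{Q_n}$ and never uses the atoms' moments in the global part: it Taylor-expands $y\mapsto K(x,y)$ about $x_n$ to order $m$, bounds the \emph{entire} Taylor polynomial by $B_mC_m|K(x,y)-K(x,x_n)|$ using the full sum \eqref{agostie1} together with \eqref{Agosty1}, and bounds the remainder by $C\ell_n^m/|x-x_n|^{d+m}$ via the $|\beta|=m$ case of \eqref{Agosty1}; the resulting inequality $|K(x,y)|\le B_mC_m|K(x,y)-K(x,x_n)|+|R(y,x_n)|$ is what generates all four exponents in \eqref{arafat1}, the $m$-family being responsible for the $\frac{d}{d+m-\mu_b}$ entry of the max-condition on $p$. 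Your observation that the $\beta=0$ term of \eqref{agostie1} alone, combined with the size estimate \eqref{1} and $|x-y|\ge\frac{1}{2}|x-x_n|$, already gives $|K(x,y)|\le A2^dB_m|K(x,y)-K(x,x_n)|$ reproduces exactly the ad-hoc hypothesis of Theorem \ref{Agosty} and short-circuits the Taylor step entirely: with it the global term produces only the exponents $\frac{d+\delta-\mu_b}{d}$ and $\frac{d+\delta}{d}$, so your argument proves the theorem under the weaker hypothesis $p>\frac{d}{d+\delta-\mu_b}$, rendering the $\frac{d}{d+m-\mu_b}$ entry of the max superfluous. Consequently your second mechanism (splitting $[\mathfrak{b},T](a_n)=(\mathfrak{b}-\mathfrak{b}_{Q_n})T(a_n)-T((\mathfrak{b}-\mathfrak{b}_{Q_n})a_n)$ and Taylor-expanding against the vanishing moments of $a_n$ to get the exponent $\frac{d+m-\mu_b}{d}$) is correct but redundant; it is also the only place in either argument where the atoms' cancellation is used at all. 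One bookkeeping point to make explicit in a write-up: for each exponent $\theta$ produced, the auxiliary parameter must satisfy $\frac{1}{\theta}<\eta<p$, not merely $\eta<p$, so that Lemma \ref{tech}(3) applies with all indices strictly greater than $1$; such a choice is possible precisely because $p\theta>1$, and it is the same implicit choice made in the proofs of Theorems \ref{4} and \ref{Agosty}.
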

\begin{proof}
Let $b\in BMO^{d}$ and $\max\{\frac{d}{d+\delta-\mu_{b}},\frac{d}{d+m-\mu_{b}}\}<p\leq 1$. Let $r>\max\{2,q\}$  and $s\geq m-\lfloor \mu_{b}\rfloor.$ We fix  $f=\sum_{n=0} ^{j}\lambda_{n}a_{n}\in\mathcal{H}_{\mathrm{fin},r,s}^{(p, q,\alpha)}$ with $\mathrm{ supp }a_n\subset Q_n$ such that $(a_n,Q_n)\in\mathcal A(p,\alpha,r,s)$.
We have
\begin{eqnarray*}
\left\|[\mathfrak{b},T](f)\right\|_{p,q,\alpha}\lesssim \left\|\sum_{n=0}^{j}|\lambda_{n}||[\mathfrak{b},T](a_{ n})\chi_{\widetilde{Q}_{n}}\right\|_{p,q,\alpha}+ \left\|\sum_{n=0}^{j}|\lambda_{n}||[\mathfrak{b},T]( a_{n})\chi_{\mathbb{R}^{d}\setminus\widetilde{Q}_{n}}\right\|_{p,q,\alpha}
\end{eqnarray*}
for all $x\in \mathbb{R}^{d}$, with $\widetilde{Q}_{n} :=2\sqrt{d}Q_{n}$, $0\leq n\leq j$. 

It suffices to estimate the second term, since the first can be estimated exactly as we did in Theorem \ref{Agosty}.

We consider, for a fix $y\in\mathbb R^d$, the Taylor expansion of order $m$ with integral remainder  of the map $x\mapsto K(x,y)$ in the neighborhood of $x_n$. We denote by $R(x_n,y)$ the remainder. 

%
Let $0<\eta<p.$ We have 
\begin{eqnarray*}
|[\mathfrak{b},T](a_{n})\chi_{\mathbb{R}^{d}\backslash\widetilde{Q}_{n}}(x)|& \leq&\int_{ Q_{n}}|K(x,y)| |(\mathfrak{b}(x)-\mathfrak{b}(y))||a_{n}|(y)dy\\&\leq& \int_{Q_{n}}\bigg[B_{m }C_{m}|K(x,y)-K(x,x_{n})| + |R(y,x_{n})| \bigg] |\mathfrak{b}(x)-\mathfrak{b}(y)||a_{n}(y)|dy.
\end{eqnarray*}
%
%
%
Since $y\in Q_n$ and $x\notin \tilde{Q_n}$, we have 
$|x-x_{n}|>2|y-x_{n}|$ so that 
$$|R(y,x_{n})|\leq C(m)\dfrac{|y-x_{n}|^{m}}{|x-x_{n}|^ {d+m}}, \  |y-x_{n}|\lesssim\ell_{n}$$
and
\begin{eqnarray*}
|K(x,x_{n})-K(x,y)|\leq A \frac{|y-x_{n}|^{\delta}}{|x-x_{n}|^{d+ \delta}}.
\end{eqnarray*}
Hence
\begin{eqnarray*}
|[\mathfrak{b},T](a_{n})\chi_{\mathbb{R}^{d}\setminus\widetilde{Q}_{n}}(x)|\leq C\bigg( \frac{\ell_{n}^{m}}{|x-x_{n}|^{d+m}}+\frac{\ell_{n}^{\delta}}{|x-x_{ n}|^{d+\delta}}\bigg)\int_{Q_{n}}|\mathfrak{b}(x)-\mathfrak{b}(y)||a_{n}(y)|dy.
\end{eqnarray*}
Moreover
\begin{eqnarray*}
\int_{Q_{n}}|\mathfrak{b}(x)-\mathfrak{b}(y) ||a_{n}(y)|dy\leq \bigg( \frac{\ell_{n} ^{-\mu_{b}}}{|x-x_{n}|^{-\mu_{b}}} +\left\|\mathfrak{b}\right\|_{BMO}\bigg) |Q_{n}|^{-\frac{1}{\alpha}+1}
\end{eqnarray*}
  and
\begin{eqnarray*}
  \frac{\ell_{n}}{|x-x_{n}|}\leq C_{d,s}\bigg(\mathfrak{M}(\chi_{Q_{n}})(x)\bigg )^{\frac{1}{d}}\;\; x\notin Q_{n},
\end{eqnarray*}
so that 
\begin{eqnarray}\label{arafat1}
|[\mathfrak{b},T](a_{n})\chi_{\mathbb{R}^{d}\backslash\widetilde{Q}_{n}}(x)|\leq C \sum_{ i=1}^{4} \frac{\bigg(\mathfrak{M}(\chi_{Q_{n}})(x)\bigg)^{v_{i}}}{\left\|\chi_ {Q_{n}}\right\|_{\alpha}}
\end{eqnarray}
with \;
$v_{1}={\frac{d+m-\mu_{0}}{d}},$ \; $v_{2}={\frac{d+m}{d}},$ \; $v_{3}={\frac{d+\delta-\mu_{0}}{d}},$ \; $v_{4}={\frac{d+\delta}{d}}$\; and  $C:=C(m,T,b).$

As for the previous result, we have 
\begin{eqnarray*}
\left\|\sum_{n=0}^{j}|\lambda_{n}||[\mathfrak{b},T]( a_{n})\chi_{\mathbb{R}^{d}\setminus\widetilde{Q}_{n}}\right\|_{p,q,\alpha}\leq C_{b,\delta,r,\eta,m,d,\mu_{0}}\left\|\sum_{n=0}^{j}\bigg( \dfrac{|\lambda_{ n}|}{\left\| \chi_{Q_{n}}\right\|_{\alpha}}\bigg)^{\eta}\chi_{Q_{n}}\right\|_{\frac{p}{\eta},\frac{q}{\eta},\frac{\alpha}{\eta}}^{\frac{1}{\eta}}
\end{eqnarray*}

The rest of the proof is then similar to that of the proof of Theorem \ref{Agosty}.
\end{proof}
The next result show that for some nice conditions on the kernel, our spaces are stable under the action of $\delta$-Calder\'on-Zygmund operator.
  \begin{thm}\label{goupiste4}
Let  $0<p\leq 1$, $\mathfrak{b}\in BMO^{d}$,  $\delta> 2\mu_{b}+ 2 d(\frac{1}{p}-1)+d+5$ and $T$ a $\delta$-Calder\'on-Zygmund operator with kernel $K$. Assume that :
\begin{itemize}
    \item $K$ verifies (\ref{Agosty1}) and (\ref{agostie1}),
    \item if $f\in L^{\nu}(\mathbb{R}^{d})$ \ $(\nu\geq \max\{2,r\})$ have compact support and  satisfies $\int_{\mathbb{R}^{d}}x^{\beta}f(x)dx=0$ for all $|\beta|\leq d+2s+2$ then
$$\int_{\mathbb{R}^{d}}x^{\beta}[\mathfrak{b},T](f)(x)dx=0$$
for all multi-indexes $\beta$ with $|\beta|\leq s.$
\end{itemize} 
Then $[\mathfrak{b},T]$ is a  bounded operator on $\mathcal{H}^{(p,q,\alpha)}.$
\end{thm}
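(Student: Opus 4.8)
The plan is to follow the architecture of the proof of Theorem~\ref{fincalderon}, replacing Lemma~\ref{atom_mol} (which turns an atom into a molecule under $T$) by a direct verification that $[\mathfrak b,T]$ maps atoms with enough vanishing moments into molecules. Concretely, I would fix $r>\max\{2,q\}$, an integer $s\geq\lfloor d(\tfrac1p-1)\rfloor$, put $s_0:=d+2s+2$, and work with $f=\sum_{n=0}^{j}\lambda_n a_n\in\mathcal H^{(p,q,\alpha)}_{\mathrm{fin},\infty,s_0}\cap\mathcal C(\mathbb R^d)$, where $(a_n,Q_n)\in\mathcal A(p,\alpha,\infty,s_0)$ and $x_n,\ell_n$ denote the center and side length of $Q_n$. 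The objective is to show that, for a fixed constant $C>0$, each $\tfrac1C[\mathfrak b,T](a_n)$ is a $(p,\alpha,r,s)$-molecule centered on $Q_n$ in the sense of Definition~\ref{mole1}.

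Of the three molecular conditions, two are essentially at hand. Condition~(1), the local bound $\|[\mathfrak b,T](a_n)\chi_{\widetilde Q_n}\|_r\lesssim|Q_n|^{\frac1r-\frac1\alpha}$, follows just as in Theorem~\ref{Agosty} from the $L^r$-boundedness of $[\mathfrak b,T]$ (valid for $r>1$ since $\mathfrak b\in BMO$) together with the $L^r$-normalization of the atom. Condition~(3), the vanishing of $\int_{\mathbb R^d}x^\beta[\mathfrak b,T](a_n)(x)\,dx$ for $|\beta|\leq s$, is exactly the second standing hypothesis of the theorem: it applies because $a_n$ is compactly supported, belongs to $L^\nu$, and has vanishing moments up to order $s_0=d+2s+2$.

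The heart of the matter, and the step I expect to be the main obstacle, is the molecular decay Condition~(2),
$$|[\mathfrak b,T](a_n)(x)|\lesssim |Q_n|^{-\frac1\alpha}\Big(1+\tfrac{|x-x_n|}{\ell_n}\Big)^{-2d-2s-3},\qquad x\notin\widetilde Q_n.$$
For this I would reproduce the pointwise analysis of the theorem preceding the present one: writing $[\mathfrak b,T](a_n)(x)=T\big((\mathfrak b(x)-\mathfrak b)a_n\big)(x)$, estimating $|K(x,y)|$ through the Taylor expansion of $K(x,\cdot)$ at $x_n$ controlled by \eqref{Agosty1}--\eqref{agostie1} and the Hölder condition \eqref{2}, and bounding $\int_{Q_n}|\mathfrak b(x)-\mathfrak b(y)||a_n(y)|\,dy$ by means of the $BMO^d$ estimate \eqref{Jesus} and Hölder's inequality, exactly as in \eqref{arafat1}. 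This produces a finite sum of terms of the form $\|\chi_{Q_n}\|_\alpha^{-1}\big(\mathfrak M(\chi_{Q_n})(x)\big)^{v_i}$. Since $\mathfrak M(\chi_{Q_n})(x)\approx(1+|x-x_n|/\ell_n)^{-d}$ off $\widetilde Q_n$, the decisive point is that the slowest-decaying exponent, governed by the kernel's regularity and degraded by the $(|x-x_n|/\ell_n)^{\mu_b}$ growth inherited from \eqref{Jesus}, is of size $d+\delta-\mu_b$ (the remainder terms carried by $m$ decaying at the faster rate $d+m-\mu_b$). The assumption $\delta>2\mu_b+2d(\tfrac1p-1)+d+5$ is precisely what forces this dominant exponent to exceed the molecular threshold $2d+2s+3$, i.e. to beat the slow decay caused by the commutator structure; verifying this carefully is the delicate part of the argument.

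Once $\tfrac1C[\mathfrak b,T](a_n)\in\mathcal M\ell(p,\alpha,r,s)$ is established, I would write $[\mathfrak b,T](f)=\sum_{n=0}^{j}(C\lambda_n)\big(\tfrac1C[\mathfrak b,T](a_n)\big)$ and invoke the molecular reconstruction Proposition~\ref{reconstmolecule2} with a fixed $0<\eta<p$ to obtain $[\mathfrak b,T](f)\in\mathcal H^{(p,q,\alpha)}$ together with
$$\left\|[\mathfrak b,T](f)\right\|_{\mathcal H^{(p,q,\alpha)}}\lesssim\left\|\sum_{n=0}^{j}\Big(\frac{|\lambda_n|}{\|\chi_{Q_n}\|_\alpha}\Big)^{\eta}\chi_{Q_n}\right\|_{\frac p\eta,\frac q\eta,\frac\alpha\eta}^{\frac1\eta}.$$
Taking the infimum over all such finite atomic decompositions of $f$ gives $\|[\mathfrak b,T](f)\|_{\mathcal H^{(p,q,\alpha)}}\lesssim\|f\|_{\mathcal H^{(p,q,\alpha)}_{\mathrm{fin},\infty,s_0}}\lesssim\|f\|_{\mathcal H^{(p,q,\alpha)}}$, and the conclusion follows from the density of $\mathcal H^{(p,q,\alpha)}_{\mathrm{fin},\infty,s_0}\cap\mathcal C(\mathbb R^d)$ in $\mathcal H^{(p,q,\alpha)}$, exactly as at the end of the proof of Theorem~\ref{fincalderon}.
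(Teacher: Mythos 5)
Your overall architecture (atom $\to$ molecule $\to$ reconstruction via Proposition \ref{reconstmolecule2} $\to$ density) is exactly the paper's, which proves the theorem through Lemma \ref{goupiste3}. But there is a genuine gap at the step you yourself flag as the heart of the matter: you claim each $\tfrac1C[\mathfrak b,T](a_n)$ is a $(p,\alpha,r,s)$-molecule, i.e.\ that the decay exponent $2d+2s+3$ of Definition \ref{mole1}(2) is attained, and that the hypothesis $\delta>2\mu_b+2d(\tfrac1p-1)+d+5$ forces the slowest-decaying term past that threshold. This is false, and no largeness of $\delta$ repairs it. Since $(\mathfrak b(x)-\mathfrak b)a_n$ has no vanishing moments, the commutator cannot be handled by moment cancellation; instead the kernel is split as $|K(x,y)|\leq B_sC_s|K(x,y)-K(x,x_n)|+|R(y,x_n)|$, where the Taylor polynomial is absorbed via (\ref{agostie1}) and $R$ is the Taylor remainder. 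With derivative bounds available up to order $d+2s+3$, the remainder decays like $|x-x_n|^{-(2d+2s+3)}$ and no faster — $\delta$ controls only the H\"older-difference term, not $R$ — and your parenthetical claim that the remainder decays at the rate $d+m-\mu_b$, faster than $d+\delta-\mu_b$, presupposes $m\geq\delta$, which is nowhere assumed (only $m\geq d$). After multiplication by the unavoidable $BMO^d$ growth $(|x-x_n|/\ell_n)^{\mu_b}$ from (\ref{Jesus}), the remainder contribution decays only at rate $2d+2s+3-\mu_b<2d+2s+3$. So the image of an atom is never a $(p,\alpha,r,s)$-molecule, only a molecule with a strictly degraded parameter.

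The paper's resolution, which your proposal misses, is to accept this degradation and re-budget the parameters: Lemma \ref{goupiste3} produces $\bigl(\tfrac{1}{C_1^*}[\mathfrak b,T](a),Q\bigr)\in\mathcal M\ell(p,\alpha,r,s-\lfloor\mu_b\rfloor-1)$, and Proposition \ref{reconstmolecule2} is then invoked with this smaller parameter. For that to be legitimate one needs $s-\lfloor\mu_b\rfloor-1\geq\lfloor d(\tfrac1p-1)\rfloor$, i.e.\ $s\geq\lfloor\mu_b\rfloor+\lfloor d(\tfrac1p-1)\rfloor+1$, while the $\delta$-governed terms force $s\leq\lfloor\tfrac{\delta-d-3}{2}\rfloor$; the hypothesis $\delta>2\mu_b+2d(\tfrac1p-1)+d+5$ serves precisely to make this window for $s$ nonempty, and the atoms must then carry $d+2s+2$ vanishing moments for this \emph{larger} $s$. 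In other words, the role of the $\delta$-assumption is not to push the dominant decay exponent above $2d+2s+3$, but to leave enough room between the reconstruction threshold and the kernel regularity that one can afford to lose $\lfloor\mu_b\rfloor+1$ units of molecular parameter. With your minimal choice $s=\lfloor d(\tfrac1p-1)\rfloor$, the molecule actually obtained falls below the threshold of Proposition \ref{reconstmolecule2}, and the final norm estimate does not follow.
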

For the proof, we need the following lemma.
\begin{lem}\label{goupiste3}
Let $0<p\leq 1$, $\mathfrak{b}\in BMO^{d}$,   $\delta> 2\mu_{b}+ 2 d(\frac{1}{p}-1)+d+5$ and $T$ a $\delta$-Calder\'on-Zygmund operator with kernel  $K$. We assume the following properties  valid.
\begin{enumerate}
\item  There exist  an integer $s\in \left[\lfloor\mu_{b}\rfloor +\lfloor d(\frac{1}{p}-1)\rfloor +1,  \lfloor \frac{\delta -d-3}{2}\rfloor\right]$ and two  constants $C_{s}>0$,  $B_{s}>0$ such that 
\begin{equation}\label{A-A}
|\partial_{y}^{\beta}K(x,y)|\leq \frac{C_{s}}{|x-y|^{d+|\beta|}}
\end{equation}
and
\begin{equation}\label{jojo2bis}
\sum_{|\beta|\leq d+2s+2}\frac{1}{\beta!}\frac{|y-z|^{|\beta|}}{|x-z|^{d+|\beta|}} \leq B_{s}|K(x,y)-K(x,z)|
\end{equation}

for all multi-indexes $\beta$ with $|\beta|\leq d+2s+3$ and  for all $(x,y)$ and $(x,z)$ in  $\mathbb{R}^{d}\times \mathbb{R}^{d}\backslash\Delta.$
\item If $f\in L^{\nu}(\mathbb{R}^{d})$ \ $(\nu\geq \max\{2,r\})$ have compact support and $\int_{\mathbb{R}^{d}}x^{\beta}f(x)dx=0$ for all $|\beta|\leq d+2s+2$, then
\begin{equation}\label{3395bis}
\int_{\mathbb{R}^{d}}x^{\beta}[\mathfrak{b},T](f)(x)dx=0
\end{equation}
for all multi-indexes $\beta$ with $|\beta|\leq s.$
\end{enumerate}
 Then for all $(a,Q)\in \mathcal{A}(q,\alpha,\infty,d+2s+2)$,  $1<r<\infty$  and  $\alpha\leq r,$
\begin{eqnarray*}
\left(\frac{1}{C_{1}^{*}}[\mathfrak{b},T](a)\chi_{Q}, Q\right)\in\mathcal{M}\ell(p,\alpha,r,s-\lfloor\mu_{b}\rfloor-1) 
\end{eqnarray*}
where $C_{1}^{*}>0$ is a constant independent of  $a.$ 
\end{lem}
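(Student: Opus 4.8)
The plan is to show that, for a suitable constant $C_1^\ast$ independent of $a$, the function $\frac{1}{C_1^\ast}[\mathfrak b,T](a)$ is a $(p,\alpha,r,s')$-molecule centred on $Q$ in the sense of Definition \ref{mole1}, where $s':=s-\lfloor\mu_b\rfloor-1$. This order is admissible: the hypothesis $s\geq\lfloor\mu_b\rfloor+\lfloor d(\frac1p-1)\rfloor+1$ gives $s'\geq\lfloor d(\frac1p-1)\rfloor$, while $s'<s$. So I would verify the three conditions of Definition \ref{mole1} for $[\mathfrak b,T](a)$ and take $C_1^\ast$ to be the largest of the constants that appear.

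The size condition (1) and the cancellation condition (3) are quick. Since $\mathfrak b\in BMO$ and $T$ is a Calder\'on--Zygmund operator, $[\mathfrak b,T]$ is bounded on $L^r$ for $1<r<\infty$ by the Coifman--Rochberg--Weiss theorem \cite{CRW}; as $a$ is an $L^\infty$-atom with $\|a\|_\infty\leq|Q|^{-\frac1\alpha}$ this yields
$$\left\|[\mathfrak b,T](a)\chi_{\widetilde Q}\right\|_r\leq\left\|[\mathfrak b,T](a)\right\|_r\lesssim\|a\|_r\leq\|a\|_\infty|Q|^{\frac1r}\lesssim|Q|^{\frac1r-\frac1\alpha},$$
which is (1). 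For (3) I would invoke hypothesis (2) of the lemma: $a\in L^\nu$ has compact support and vanishing moments up to order $d+2s+2$, hence $\int_{\R^d}x^\beta[\mathfrak b,T](a)(x)\,dx=0$ for $|\beta|\leq s$, so also for $|\beta|\leq s'$ (the decay proved below makes these integrals absolutely convergent).

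The core is the pointwise decay (2): for $x\notin\widetilde Q$ I must establish
$$|[\mathfrak b,T](a)(x)|\lesssim|Q|^{-\frac1\alpha}\left(1+\tfrac{|x-x_Q|}{\ell(Q)}\right)^{-(2d+2s'+3)}.$$
Since $x\notin\mathrm{supp}\,a$, I start from $[\mathfrak b,T](a)(x)=\int_Q K(x,y)(\mathfrak b(x)-\mathfrak b(y))a(y)\,dy$. Writing the Taylor polynomial of $y\mapsto K(x,y)$ of degree $d+2s+2$ about $x_Q$, the derivative bounds (\ref{A-A}) dominate this polynomial by $C_s\sum_{|\beta|\leq d+2s+2}\frac{1}{\beta!}\frac{|y-x_Q|^{|\beta|}}{|x-x_Q|^{d+|\beta|}}$, and the comparison (\ref{jojo2bis}) then bounds it by a constant times $|K(x,y)-K(x,x_Q)|$; estimating the integral remainder by (\ref{A-A}) at order $d+2s+3$, I obtain for $y\in Q$
$$|K(x,y)|\lesssim|K(x,y)-K(x,x_Q)|+\frac{|y-x_Q|^{d+2s+3}}{|x-x_Q|^{2d+2s+3}}.$$
On the first summand I use the regularity (\ref{2}) of the $\delta$-kernel, which is valid because $|x-x_Q|\geq 2|y-x_Q|$ for $x\notin\widetilde Q$ and $y\in Q$; on the second I keep the Taylor remainder. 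Finally I estimate $\int_Q|\mathfrak b(x)-\mathfrak b(y)||a(y)|\,dy\lesssim\big[(\tfrac{|x-x_Q|}{\ell(Q)})^{\mu_b}+\|\mathfrak b\|_{BMO}\big]|Q|^{1-\frac1\alpha}$, splitting $\mathfrak b(x)-\mathfrak b(y)$ through $\mathfrak b_Q$ and applying the $BMO^d$ bound (\ref{Jesus}) to $\mathfrak b(x)-\mathfrak b_Q$ and the $BMO$ average to $\mathfrak b(y)-\mathfrak b_Q$, exactly as in the proof of Theorem \ref{Agosty}.

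Collecting these estimates bounds $|[\mathfrak b,T](a)(x)|$ by $|Q|^{-\frac1\alpha}$ times the sum of $\big(\tfrac{\ell(Q)}{|x-x_Q|}\big)^{d+\delta-\mu_b}$ and $\big(\tfrac{\ell(Q)}{|x-x_Q|}\big)^{2d+2s+3-\mu_b}$. It then remains to check that both exponents are at least $2d+2s'+3=2d+2s-2\lfloor\mu_b\rfloor+1$: the second reduces to $2+2\lfloor\mu_b\rfloor\geq\mu_b$, which always holds, and the first to $\delta\geq d+2s+1-2\lfloor\mu_b\rfloor+\mu_b$, which is guaranteed by $s\leq\lfloor\frac{\delta-d-3}{2}\rfloor$, i.e. by the standing hypothesis $\delta>2\mu_b+2d(\frac1p-1)+d+5$. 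As $|x-x_Q|\gtrsim\ell(Q)$ off $\widetilde Q$, both powers are controlled by $(1+|x-x_Q|/\ell(Q))^{-(2d+2s'+3)}$, giving (2). The main obstacle is exactly this step: the function $(\mathfrak b-\mathfrak b_Q)a$ has no vanishing moments, so $T((\mathfrak b-\mathfrak b_Q)a)$ decays a priori only like $|x-x_Q|^{-d}$; it is the comparison (\ref{jojo2bis}), together with the $d+2s+2$ moments of $a$ and the H\"older-type control (\ref{Jesus}), that upgrades this to molecular decay, at the price of the $\lfloor\mu_b\rfloor+1$ moments lost in passing from $s$ to $s'$.
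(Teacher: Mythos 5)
Your proposal is correct and follows essentially the same route as the paper's own proof: the $L^r$ size bound via the boundedness of $[\mathfrak b,T]$ on $L^r$, the pointwise decay obtained by Taylor-expanding $K(x,\cdot)$ about $x_Q$ and combining (\ref{A-A}) with the comparison (\ref{jojo2bis}), the kernel regularity (\ref{2}), and the $BMO^{d}$ estimate (\ref{Jesus}), and finally the cancellation from hypothesis (2). The only cosmetic differences are that you keep just the two dominant decay terms where the paper lists four, and that you make explicit the exponent arithmetic showing both exponents dominate $2d+2(s-\lfloor\mu_b\rfloor-1)+3$, which the paper leaves largely implicit.
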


\proof
Let $\lfloor\mu_{b}\rfloor +\lfloor d(\frac{1}{p}-1)\rfloor +1\leq s\leq \lfloor \frac{\delta -d-3}{2}\rfloor$,  $(a,Q)\in \mathcal{A}(p,\alpha,\infty,d+2s+2)$ and $1<r<+\infty$  with $\alpha\leq r.$  Put $\widetilde{Q}:=2\sqrt{d}Q$ and denote by $x_{Q}$ and  $\ell_{Q}$ respectively the center and the size length of $Q.$  We have:
\begin{eqnarray*}
\left\|[\mathfrak{b},T](a)\chi_{\widetilde{Q}}\right\|_{r}\leq\left\|[\mathfrak{b},T](a)\right\|_{r}\leq C_{b,r}|Q|^{\frac{1}{r}-\frac{1}{\alpha}}
\end{eqnarray*}
since $[\mathfrak{b},T]$ is bounded on $L^{r}$.

Fix $x\notin \widetilde{Q}.$ We have 
\begin{eqnarray*}
|[\mathfrak{b},T](a)(x)|&=&|T((\mathfrak{b}(x)-\mathfrak{b})a)(x)|\\
&\leq& \int_{Q}\bigg[B_{s}C_{s}|K(x,y)-K(x,x_{Q})| + |R(y,x_{Q})| \bigg] |\mathfrak{b}(x)-\mathfrak{b}(y)||a(y)|dy
\end{eqnarray*}
 thanks to  Taylor's formula  and  Estimates (\ref{A-A}) and  (\ref{jojo2bis}).  But then 
$$|K(x,y)-K(x,x_{Q})|\leq C \frac{|y-x_{Q}|^{\delta}}{|x-x_{Q}|^{d+\delta}}\leq C\frac{\ell^{\delta}_Q}{|x-x_{Q}|^{d+\delta}}$$
and   
$$ |R(y,x_{Q})|\leq
 C_{d,s}\frac{|y-x_{Q}|^{d+2s+3}}{|x-x_{Q}|^{2d+2s+3}}\leq  C_{d,s}\frac{\ell^{d+2s+3}_Q}{|x-x_{Q}|^{2d+2s+3}}$$
 for all $y\in Q$. We also have that    
\begin{eqnarray*}    
\int_{Q}|\mathfrak{b}(x)-\mathfrak{b}(y)||a(y)|dy\leq C\bigg(\frac{\ell_{Q}^{-\mu_{b}}}{|x-x_{Q}|^{-\mu_{b}}} +\left\|\mathfrak{b}\right\|_{BMO}\bigg) |Q
|^{-\frac{1}{\alpha}+1}.
\end{eqnarray*}
 It comes that 
$|[\mathfrak{b},T](a)(x)|\leq\frac{1}{4} C_{1}^{*}\bigg(E_{1}+E_{2}+E_{3}+E_{4}\bigg)|Q|^{-\frac{1}{\alpha}}$
with 

$E_{1}= \frac{\ell_{Q}^{d+\delta-\mu_{b}}}{|x-x_{Q}|^{d+\delta-\mu_{b}}}, \   E_{2}=\frac{\ell_{Q}^{d+\delta}}{|x-x_{Q}|^{d+\delta}}, \ E_{3}=\frac{\ell_{Q}^{2d+2s+3-\mu_{b}}}{|x-x_{Q}|^{2d+2s+3-\mu_{b}}}\text{ and }E_{4}=\frac{\ell_{Q}^{2d+2s+3}}{|x-x_{Q}|^{2d+2s+3}}$, where $C_{1}^{*}:= C(r,b,s,T)$. 
Since $|x-x_{Q}|\approx \ell_{Q}+ |x-x_{Q}|$ it comes that 
 \begin{eqnarray*}
 E_{i}\lesssim \left(\dfrac{\ell_{Q}+ |x-x_{Q}|}{\ell_{Q}}\right)^{-2d-2s-3+ 2(\lfloor \mu_{b}\rfloor+1)}.
  \end{eqnarray*}
 for all $i\in\{1;2;3;4\}$ and all $x\notin \widetilde{Q}.$

Consequently, 
\begin{eqnarray*}
|[\mathfrak{b},T](a)(x)|\leq C_{1}^{*} |Q|^{-\frac{1}{\alpha}}\left(\frac{\ell_{Q}+ |x-x_{Q}|}{\ell_{Q}}\right)^{-2d-2s-3+ 2(\lfloor \mu_{b}\rfloor+1)}. 
\end{eqnarray*}
 Hence 
$$\left\|\frac{1}{C_{1}^{*}}[\mathfrak{b},T](a)\chi_{Q}\right\|_{r}\leq |Q|^{\frac{1}{r}-\frac{1}{\alpha}}$$ and 
$$\bigg|\frac{1}{C_{1}^{*}}[\mathfrak{b},T](a)(x)\bigg|\leq |Q|^{-\frac{1}{\alpha}}\left(1+\dfrac{ |x-x_{Q}|}{\ell_{Q}}\right)^{-2d-2(s-\lfloor \mu_{b}\rfloor-1)-3}$$
for all $x\notin \widetilde{Q}.$
We also have 
$$\int_{\mathbb{R}^{d}}x^{\beta}[\mathfrak{b},T](a)(x)dx=0,$$
for all $|\beta|\leq s-\lfloor \mu_{b}\rfloor-1;$ since $a\in L^{\nu}
(\mathbb{R}^{d})$ for $\nu\geq \max\{2,r\} >1$ and $\int_{\mathbb{R}^{d}}x^{\beta}(a)(x)dx=0,$ for $|\beta|\leq d+2s+2.$ 

Thus  $\frac{1}{C_{1}^{*}}[\mathfrak{b},T](a)$ is a $(q,\alpha,r,s-\lfloor \mu_{b}\rfloor-1)$-molecule centered on $Q.$

\epf

\proof[Proof of Theorem \ref{goupiste4}]
Let $f\in \mathcal{H}_{\mathrm{fin},\infty,s_{0}}^{(p,q,\alpha)}\cap \mathcal{C}(\mathbb{R}^{d})$ where $s_{0}=d+2s+2$. Consider a finite sequence  $\{(a_{n},Q_{n})\}_{n=0}^{j}$ of elements of $\mathcal{A}(p,\alpha,\infty,d+2s+2)$ and a finite sequence of scalars $\{\lambda_{n}\}_{n=0}^{j}$ such that $f=\sum_{n=0}^{j}\lambda_{n}a_{n}.$

Let $\max\{1,q\}<r<\infty.$ Accoding to  Lemma \ref{goupiste3}, $\frac{1}{C_{1}^{*}}[\mathfrak{b},T](a_{n})$  is a $(p,\alpha,r,s-\lfloor\mu_{b}\rfloor-1)$-molecule centered on $Q_{n},$ for $n\in\{0,1,...,j\}.$ We Fix $0<\eta<p.$ We have

\begin{eqnarray*}
[\mathfrak{b},T](f)\leq\sum_{n=0}^{j}\lambda_{n}[\mathfrak{b},T](a_{n})=\sum_{n=0}^{j}(c_{1}^{*}\lambda_{n})\left(\frac{1}{c_{1}^{*}}[\mathfrak{b},T](a_{n})\right) \in \mathcal{H}^{(p,q,\alpha)}
\end{eqnarray*}
and
\begin{eqnarray*}
\left\|[\mathfrak{b},T](f)\right\|_{\mathcal{H}^{(p,q,\alpha)}}\lesssim \left\|\sum_{n=0}^{j}\left(\frac{|c_{1}^{*}\lambda_{n}|}{\left\|\chi_{Q_{n}}\right\|_{\alpha}}\right)^{\eta}\chi_{Q_{n}} \right\|_{\frac{p}{\eta},\frac{q}{\eta},\frac{\alpha}{\eta}}^{\frac{1}{\eta}}\lesssim\left\|\sum_{n=0}^{j}\left(\frac{|\lambda_{n}|}{\left\|\chi_{Q_{n}}\right\|_{\alpha}}\right)^{\eta}\chi_{Q_{n}} \right\|_{\frac{p}{\eta},\frac{q}{\eta},\frac{\alpha}{\eta}}^{\frac{1}{\eta}}
\end{eqnarray*}
thanks to Theorem \ref{reconstmolecule2}. We deduce  that 
\begin{eqnarray*}
\left\|[\mathfrak{b},T](f)\right\|_{\mathcal{H}^{(p,q,\alpha)}}\lesssim\left\|f\right\|_{\mathcal{H}_{\mathrm{fin},\infty,s_{0}}^{(p,q,\alpha)}}\lesssim \left\|f\right\|_{\mathcal{H}^{(p,q,\alpha)}}.
\end{eqnarray*}
Therefore, $[\mathfrak{b},T]$ is a bounded operator from  $\mathcal{H}_{\mathrm{fin},\infty,s_{0}}^{(p,q,\alpha)}(\mathbb R^d)\cap \mathcal{C}(\mathbb{R}^{d})$ into $\mathcal{H}^{(p,q,\alpha)}$ and the density of $\mathcal{H}_{\mathrm{fin},\infty,s_{0}}^{(p,q,\alpha)}(\mathbb R^d)\cap \mathcal{C}(\mathbb{R}^{d})$ in $\mathcal{H}^{(p,q,\alpha)}(\mathbb R^d)$ yields the result.
\epf

\end{document}